\definecolor{mysoftblue}{hsb}{0.55,0.15,0.9}
\definecolor{mysoftpurple}{hsb}{0.9,0.25,0.9}
\definecolor{mysoftorange}{hsb}{0.15,0.3,0.95}
\newtheorem{lemma}{Lemma}[section]
\newtheorem{theorem}{Theorem}[section]
\newtheorem{proposition}{Proposition}[section]
\newtheorem{conjecture}{Conjecture}[section]
\newtheorem{definition}{Definition}[section]
\newtheorem{remark}{Remark}[section]
\numberwithin{equation}{section} \numberwithin{theorem}{section}
\numberwithin{example}{section} \numberwithin{remark}{section}
\numberwithin{figure}{section} \numberwithin{algorithm}{section}
\title[Green function rigidity]{Green function rigidity for two dimensional sphere}
\author{Mijia Lai}
\address{School of Mathematical Sciences, Shanghai Jiao Tong University, Shanghai 200240}\email{laimijia@sjtu.edu.cn}
\author{Chilin Zhang}
\address{School of Mathematical Sciences, Fudan University, Shanghai 200433, China}\email{zhangchilin@fudan.edu.cn}
\begin{document}
\begin{abstract}
We verify a conjecture proposed by X. Chen and Y. Shi, which arises from their study of the Green function on spheres in Euclidean space. More precisely, let $M\subset \mathbb{R}^3$ be a closed $C^{2}$ embedded surface and suppose that there exists a point $p\in M$ so that its Green function $G$ is of the form $G(p,q)=-\frac{1}{2\pi} \ln d_{\mathbb{R}^3}(p,q)+c, \forall q\neq p$, then $M$ must be a round sphere. 
\end{abstract}
\maketitle

\section{Introduction} 

In \cite{CS1}, the authors made interesting investigations on the Green function of various conformal invariant operators on the round sphere in the Euclidean space.  An interesting finding is that the explicit expression of the Green function is written in terms of the extrinsic distance of the underlying Euclidean space. Denote by $||\cdot||$ the Euclidean norm of a vector, then we have the following. 
\begin{itemize}
    \item For $\mathbb{S}^2\subset\mathbb{R}^3$, the Green function of $\Delta_g$ has the form $G(p, q)=-\frac{1}{2 \pi} \log \|p-q\|+C$;
    \item For $\mathbb{S}^n\subset  \mathbb{R}^{n+1}, n\geq 3$, the Green function for the conformal Laplacian $P_2^g$ has the form $G(p, q)=c_{n, 1}\|p-q\|^{2-n}$;
    \item For $\mathbb{S}^4\subset  \mathbb{R}^5$, the Green function for the Paneitz operator $P_4^g$ has the form $G(p, q)= -\frac{1}{8 \pi^2} \log \|p-q\|+C$;
    \item For $\mathbb{S}^n\subset \mathbb{R}^{n+1}, n\geq 3, n\neq 4$, the Green function for the Paneitz operator $P_4^g$ has the form $G(p, q)=c_{n, 2}\|p-q\|^{4-n}$.
\end{itemize}

The authors then asked the converse question: let $M\subset \mathbb{R}^n$ be a closed embedded hypersurface, suppose that its Green function has the form as above, is it true that $M$ must be isometric to the round sphere?  It can be shown that if the particular form of Green function holds for any pair of points $(p,q)$ on $M$, then all points on $M$ are umbilical, and thus the answer to the converse question is certainly yes. In view of this, Chen-Shi raised a more difficult question, referred to as the Strong Green function rigidity conjecture: 

\begin{conjecture}
    Let $M^n \subset \mathbb{R}^{n+1}$ be a smooth closed embedded hypersurface, suppose that there exists a point $p\in M$, such that its Green function of the corresponding conformal invariant operator is of the form as above, then $M$ must be a round sphere. 
\end{conjecture}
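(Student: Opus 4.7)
The plan is to first reduce the hypothesis on the Green function to a pointwise differential identity on $M$. Combining the defining equation $-\Delta_g G(p,\cdot) = \delta_p - 1/|M|$ with the assumed form $G(p,q) = -\tfrac{1}{2\pi}\log\|q-p\|+c$ gives, away from $p$, the scalar identity $\Delta_g \log r = -A$, where $r(q) := \|q-p\|$ and $A := 2\pi/|M|$. A direct computation — using the formula $\Delta_g F|_M = \Delta_{\mathbb{R}^3} F - F_{\nu\nu} - H F_\nu$ relating the induced and ambient Laplacians — yields
\[
\Delta_g \log r \;=\; \frac{2h^2}{r^4} - \frac{Hh}{r^2}, \qquad h(q) := \langle q-p,\nu(q)\rangle,
\]
so the hypothesis is equivalent to the pointwise equation
\[
2h^2 - Hh\, r^2 + A\, r^4 \;=\; 0 \qquad \text{on } M \setminus \{p\},
\]
which I denote $(\star)$; equivalently, $H = 2h/r^2 + A r^2/h$.

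Next I extract umbilicity at $p$ from $(\star)$ by a limiting argument. Expanding a geodesic of $M$ issuing from $p$ in direction $\hat v \in T_pM$, one finds $r^2 = s^2 + O(s^4)$ and $h = \tfrac12 B(\hat v,\hat v)\, s^2 + O(s^3)$, so $h(q)/r(q)^2 \to \tfrac12 k(\hat v)$ as $q\to p$ along $\hat v$, where $k(\hat v) := B(\hat v,\hat v)$ is the normal curvature. Dividing $(\star)$ by $r^4$ and passing to this limit gives, for every unit $\hat v\in T_pM$,
\[
k(\hat v)^2 - H(p)\, k(\hat v) + 2A \;=\; 0.
\]
Since $k(\hat v)$ varies continuously over $[k_1(p),k_2(p)]$ but the quadratic has at most two roots, $k_1(p)=k_2(p)=:k_0$ and $k_0^2=2A$, so $p$ is umbilic with $k_0=\sqrt{2A}$. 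This positivity forces $h>0$ near $p$; since $(\star)$ also prevents $h$ from vanishing on $M\setminus\{p\}$ (otherwise $A r^4 = 0$), continuity and connectedness of $M\setminus\{p\}$ give $h>0$ throughout, so $M$ bounds a star-shaped domain, is diffeomorphic to $S^2$, and satisfies $\int_M K\,dA = 4\pi$ by Gauss–Bonnet.

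The final step is to propagate this rigidity to all of $M$. From $(\star)$ and AM–GM one reads $H\geq 2\sqrt{2A}$ pointwise, with equality iff $h = r^2\sqrt{A/2}$; the umbilic analysis shows equality is attained at $p$, so $p$ realizes the global minimum of $H$. My plan is to combine $(\star)$ with the Minkowski identities $\int_M Hh\, dA = 2|M|$ and $\int_M Kh\, dA = \tfrac12 \int_M H\, dA$, Gauss–Bonnet, and the Bochner formula applied to judiciously chosen test functions built from $r^2/2$ (for which the clean Hessian identity $\operatorname{Hess}_g(r^2/2) = g - hB$ holds, with trace-free part $-hB^\circ$), to derive the integral identity $\int_M h^2 |B^\circ|^2\, dA = 0$. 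Since $h>0$ on $M\setminus\{p\}$, this forces $B^\circ \equiv 0$, i.e., $M$ is totally umbilic, and the classical characterization of umbilical surfaces in $\mathbb{R}^3$ (equivalently Alexandrov's soap bubble theorem applied to the constant mean curvature thereby obtained) identifies $M$ as a round sphere of radius $1/\sqrt{2A}$.

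I expect this propagation step to be the main obstacle. The pointwise bound $H \geq 2\sqrt{2A}$ together with $A|M|=2\pi$ only reproduces the classical Willmore inequality $\int_M H^2\, dA \geq 16\pi$, so a crude integral comparison cannot isolate the equality case. A successful strategy must exploit the finer structure of $(\star)$, e.g.: (i) a Reilly/Bochner-type identity applied to a cleverly chosen combination of $r^2$, $h$, and $\log r$ designed to produce a nonnegative integrand whose vanishing is forced by the preceding identities; (ii) a Kelvin inversion centered at $p$, which transforms $(\star)$ into $2h^{*2} - H^* h^* |q^*|^2 = A$ on a complete surface $M^*$ asymptotic to a plane at distance $\sqrt{A/2}$ from the origin, reducing the problem to proving $M^*$ coincides with that plane (via a half-space theorem for minimal-type surfaces or a support-function maximum principle); or (iii) an Alexandrov moving-plane argument, made available by the star-shapedness established in the previous step. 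Selecting and cleanly executing one of these routes is the heart of the proof.
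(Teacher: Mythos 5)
Your first half matches the paper's preliminary reductions exactly: the pointwise identity $2h^{2}-Hhr^{2}+Ar^{4}=0$ (the paper's \eqref{eq:surface2} with $A=1/2$ after normalizing $|M|=4\pi$), the continuum-of-normal-curvatures argument for umbilicity at $p$, the resulting $h>0$, star-shapedness, and the AM--GM bound $H\ge 2\sqrt{2A}$ are all correct and essentially identical to Section~2 of the paper. The problem is that your ``propagation'' step --- which you yourself call the heart of the proof --- is not actually carried out: you offer three candidate strategies without executing any. Your primary candidate (i) is not substantiated, and I doubt it can work as stated: applying Bochner to $f=r^{2}/2$ with $\operatorname{Hess}^{\circ}f=-hB^{\circ}$ gives only the integral relation $\int_{M}h^{2}|B^{\circ}|^{2}=\int_{M}\bigl(\tfrac{1}{2}(2-hH)^{2}-K(r^{2}-h^{2})\bigr)$, whose right-hand integrand is not pointwise nonnegative and does not vanish pointwise even on the round sphere (it only vanishes in the mean). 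Combining it with Minkowski/Gauss--Bonnet reproduces, as you observe, information at the level of the Willmore inequality, which cannot distinguish the equality case. There is no indication of a nonnegative quantity whose integral is forced to vanish.

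Your candidate (iii) is indeed the route the paper takes, but ``an Alexandrov moving-plane argument made available by star-shapedness'' glosses over precisely the work that makes it succeed. The paper does \emph{not} run moving planes directly on $M\subset\mathbb{R}^{3}$. Instead it uses star-shapedness to rewrite $M$ as a radial graph $X(q)=e^{\rho(q)}q$ over the open hemisphere $\mathbb{S}^{2}_{+}$, turning $(\star)$ into a quasilinear minimal-surface-type equation for $\rho$ with the degenerate Dirichlet condition $\rho=-\infty$ on $\partial\mathbb{S}^{2}_{+}$ (equations \eqref{eq: radial}, \eqref{asym1}, \eqref{asym2}). The moving-circle argument then has to be made to work in the presence of this singular boundary: this requires the auxiliary barrier $\max\{\sigma/\langle x,\vec e_{3}\rangle-1,0\}$, the asymptotic expansions of $\rho$ and $\nabla\rho$ near $\partial\mathbb{S}^{2}_{+}$, a two-regime energy estimate (different treatments near and away from the boundary), Poincar\'e inequalities on thin slabs, and the algebraic Lemmas~\ref{lem. uniform elliptic in energy sense}--\ref{lem. linear algebra} to linearize the mean-curvature operator. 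Moreover, even after rotational symmetry is established, a separate ODE uniqueness argument near $\theta=0$ (a maximum principle for the linearized ODE, then Picard) is needed to pin down the sphere. None of these ideas appears in your sketch, so the proposal leaves the essential rigidity step open.
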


Using the positive mass theorem, Chen-Shi, Chen-Gan-Shi verified their conjecture for the conformal Laplacian in dimension $n\geq 3$ \cite{CS1,CGS,CS2}. The key ingredient of their proof is the connection with asymptotic flat manifold. Indeed, using $G^{\frac{4}{n-2}}(p, \cdot)$ as a conformal factor, one can  turn $M$ into an asymptotic flat manifold with $p$ being opened up to the infinity. The particular form of the Green function enables one to compute the ADM mass, which happens to be zero. Consequently, the rigidity conclusion follows. 

However, such an approach does not appear to be feasible in two dimensions. In this paper, we adopt an alternative strategy and provide a positive resolution to the Strong Green Function Rigidity Conjecture for the standard Laplace on two sphere.

\begin{theorem}\label{Tmain}
    Let $M\subset \mathbb{R}^3$ be a $C^2$ closed embedded surface, suppose that there exists a point $p\in M$ and its Green function $G$ is of the form 
    \[
    G(p,q)=-\frac{1}{2\pi} \ln ||p-q||+c, \quad \forall q\in M\setminus p,
    \]
    then $M$ must be a round sphere. 
\end{theorem}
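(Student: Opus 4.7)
My approach is to translate the Green function hypothesis into a pointwise geometric equation on $M$, extract strong information at the distinguished point $p$, and then close the rigidity globally via a Möbius inversion centred at $p$. Using the standard decomposition
$\Delta_g F = \Delta_{\mathbb{R}^3}F - \operatorname{Hess}_{\mathbb{R}^3}F(\nu,\nu) - H\,\partial_\nu F$
for the restriction to $M$ of a smooth ambient function, applied to $F = \ln\|q-p\|$, one computes
\[
\Delta_g \ln\|q-p\| \;=\; \frac{2u^2}{r^4} - \frac{Hu}{r^2},
\]
where $r := \|q-p\|$, $u := (q-p)\cdot\nu$, and $H$ is the mean curvature of $M$. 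Combining with the intrinsic identity $\Delta_g G(p,\cdot) = |M|^{-1} - \delta_p$ and the hypothesis $G = -\frac{1}{2\pi}\ln r + c$ yields, on $M\setminus\{p\}$, the pointwise equation
\[
H u\, r^2 - 2u^2 \;=\; K\, r^4, \qquad K := 2\pi/|M|,
\]
equivalently $H = 2\alpha + K/\alpha$ with $\alpha := u/r^2$. A local graph expansion of $M$ over its tangent plane at $p$ then forces $p$ to be an umbilic point with $H(p) = 2\sqrt{2K}$ and $\alpha(p) = \sqrt{K/2}$; moreover, the right-hand side $K r^4$ is nonzero on $M\setminus\{p\}$, so $u$ is nonvanishing there and, by continuity, keeps a definite sign.

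Applying AM--GM to $H = 2\alpha + K/\alpha$ gives $H \geq 2\sqrt{2K}$ pointwise on $M\setminus\{p\}$, with equality iff $\alpha = \sqrt{K/2}$; integrating yields the Willmore-type bound
\[
\int_M H^2\, dA \;\geq\; 8K|M| \;=\; 16\pi.
\]
If this were an equality, then $\alpha \equiv \sqrt{K/2}$, so $H$ would be the constant $2\sqrt{2K}$, and by Alexandrov's theorem $M$ would be a round sphere. The whole theorem therefore reduces to upgrading the pointwise inequality $H^2 \geq 8K$ to a pointwise equality.

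The crux, and the main obstacle, is this rigidity step. My plan is to exploit the Möbius inversion $\phi(q) = (q-p)/\|q-p\|^2$: a direct calculation together with the pointwise equation shows that the mean curvature of $\tilde M := \phi(M\setminus\{p\})$, pulled back to $M\setminus\{p\}$, is
\[
\tilde H \;=\; \frac{Kr^4 - 2u^2}{u},
\]
so the desired identity $\alpha^2 \equiv K/2$ on $M$ is equivalent to $\tilde M$ being minimal in $\mathbb{R}^3$. Since $p$ is umbilic, $\tilde M$ is a complete noncompact surface whose end at infinity is asymptotic to a plane, and the Willmore inversion identity
\[
W(M) \;=\; W(\tilde M) + 4\pi
\]
(verified by expanding $W(M), W(\tilde M)$ in terms of $\alpha$ using the pointwise equation) reduces the theorem to showing $W(\tilde M) = 0$. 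The hard step is therefore to prove that a complete noncompact $\tilde M \cong \mathbb{C}$ with planar asymptotics and mean curvature of the specific form above must be a plane; I expect this to require genuinely 2-dimensional tools, for instance the Weierstrass representation of minimal surfaces together with a complex-analytic rigidity at the planar end.
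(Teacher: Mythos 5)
Your derivation of the pointwise equation $Hur^{2}-2u^{2}=Kr^{4}$ (equivalently $H=2\alpha+K/\alpha$ with $\alpha=u/r^{2}$), the conclusion that $p$ is umbilic, and the sign constraint $u\neq 0$ on $M\setminus\{p\}$ all match the paper's Section 2. The AM--GM bound $H\geq 2\sqrt{2K}$, the resulting Willmore inequality, and the inversion formula $\tilde{H}=(Kr^{4}-2u^{2})/u$ are also correct. But the proof does not close, and the gap is not a technicality. Showing $W(\tilde{M})=0$, i.e.\ $\tilde{M}$ minimal, is logically equivalent (via the pointwise equation) to showing equality in your AM--GM bound, i.e.\ $H$ constant on $M$; the inversion has repackaged the problem, not solved it. What you still need is a Bernstein/Liouville-type rigidity statement: a complete noncompact properly embedded surface with a planar end and mean curvature governed by the specific quasilinear law $\tilde{H}=(Kr^{4}-2u^{2})/u$ must be a plane. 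You leave this as a hope (\emph{``I expect this to require genuinely 2-dimensional tools \dots\ complex-analytic rigidity at the planar end''}). No such theorem is standard; prescribed-mean-curvature equations of this type need not have unique solutions, and controlling the planar end is precisely where all the difficulty of the original problem lives. As it stands, this is a genuine gap.

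The paper goes a different way after the same pointwise equation. It exploits $u\neq 0$ to write $M$ as a radial graph $X(q)=e^{\rho(q)}q$ over the open hemisphere $\mathbb{S}^{2}_{+}$, turning the equation into a quasilinear minimal-surface-type PDE for $\rho$ with degenerate Dirichlet data $\rho\to-\infty$ on $\partial\mathbb{S}^{2}_{+}$ and precise boundary asymptotics coming from the umbilicity at $p$. The rigidity is then proved by the method of moving planes (rotating great circles): an energy argument in weak form handles the degenerate boundary region, and the strong maximum principle handles the interior; once rotational symmetry is known, the problem collapses to an ODE whose unique solution with the right asymptotics gives the round sphere. That route gets the rigidity directly from the structure of the radial-graph equation and never needs a noncompact Bernstein theorem. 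Your opening reduction is sound and is the same as the paper's, but you should either prove the missing rigidity statement for $\tilde{M}$ or abandon the inversion framing in favor of something like the paper's moving-plane argument on $\mathbb{S}^{2}_{+}$.
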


The main idea of the proof is as follows. First, we show that the particular form of Green function is equivalent to an equation (see \eqref{eq:surface2}) for the mean curvature of $M$. Secondly, based on \eqref{eq:surface2}, $M$ is star-shaped and thus we can view it as a radial graph over $\mathbb{S}^2_{+}$. Consequently we turn \eqref{eq:surface2} into an equation (see \eqref{eq: radial}) that the radial function $\rho$ satisfies. This is a minimal surface type equation on $\mathbb{S}^2_{+}$ with Dirichlet boundary condition being $\rho=-\infty$. Similar equations for the radial graph have been a subject of continuing research, see, e.g. \cite{FR, L, S, T}. Finally, we use the moving plane (indeed, rotating circle) method to show that any solution of \eqref{eq: radial} must be rotational symmetric, from which the main conclusion follows. 

We also remark that Chen-Shi\cite{CS1} has verified the strong Green function conjecture for two sphere under assumptions that $M$ is analytic and rotational symmetric. 

The organization of the paper is as follows. In Section 2, we setup the basic notation and derive several equivalent equations based on the particular form of the Green function. We also establish several lemmas to deal with \eqref{eq: radial}. In Section 3, we provide detailed proof of theorem~\ref{Tmain}. In Section 4, borrowing some ideas from Chen-Shi's work, we make further discussions on our method.

\section{Preparations}
\subsection{Basic setup}
Let $M\subset \mathbb{R}^3$ be a compact embedded surface. 
Let $\nu$ be the exterior unit normal vector field on $M$. The second fundamental form is given by $II(e_i, e_j)=\langle \nabla_{e_i} \nu, e_j\rangle$, 
we use $H=\kappa_1+\kappa_2$ for mean curvature. 
In this notation, the unit two sphere has $H=2$ with respect to the unit outward normal. 

Under the assumptions of Theorem~\ref{Tmain}, the Green function  of $M$ is of the form 
$G(p,q)=-\frac{1}{2\pi} \log ||p-q||+c$, for some fixed $p$ and any $q\neq p$. 
We may set $p$ at the origin and simply write Green function as 
\[
G(y)=-\frac{1}{2\pi} \log |y|+c. 
\]
As rescaling does not change the Green function, we also assume that 
$\operatorname{Area}(M)=4\pi$. 

Hence, on $M$ we have  
\begin{align} \label{eq:laplaceonM}
\Delta_{M} G=\frac{1}{4\pi}- \delta_0
\end{align}
in the sense of distribution.
A direct computation shows that 
\begin{align}  \label{eq:lapalce}
\Delta_{\mathbb{R}^3} G(y) =-\frac{1}{2\pi } \frac{1}{|y|},\quad y\neq 0.
\end{align}
We also have 
\begin{align} \label{eq:surface1}
\Delta_{\mathbb{R}^3} G(y) =\nabla^2 G(\nu, \nu) + H \frac{\partial G}{\partial \nu} +\Delta_{M} G(y),
\end{align}
Since $\nabla^2 G=-\frac{1}{2\pi} \left(\frac{\delta_{ij}}{|y|^2}-\frac{2y_{i}y_{j}}{|y|^4}\right)$ given that $G(y)=-\frac{1}{2\pi} \log |y|+c$, 
combining \eqref{eq:laplaceonM}, \eqref{eq:lapalce}, \eqref{eq:surface1}, we obtain 
\begin{align} \label{eq:surface2}
    \frac{2(\langle y, \nu \rangle)^2}{|y|^4}- H \frac{\langle y,\nu \rangle }{|y|^2}+\frac{1}{2}=0 \quad, x\in M\setminus\{0\}.
\end{align}
Observe that $\langle y, \nu \rangle\neq0$ in $M\setminus\{0\}$, i.e., $M$ (more precisely, the region enclosed by $M$) is star-shaped with respect to the origin.

It follows from \eqref{eq:surface2} that $\langle y, \nu \rangle\neq 0$ for all $y\neq 0$. Then, for $y\neq 0$, \eqref{eq:surface2} reduces to  
\[
H=\frac{2\langle y, \nu\rangle}{|y|^2}+\frac{|y|^2}{2\langle y,\nu\rangle}\geq 2. 
\]

We can write $M$ near the origin as a graph of function $y_3=f(y_1,y_2)$ with $\nabla f(0)=0$. It follows that 
\[
\nu=\frac{(-f_1,-f_2, 1)}{\sqrt{1+|D f|^2}}.
\]
Using Taylor expansion at $(y_1, y_2)=(0,0)$, we find that
\[
\langle y, \nu\rangle=-\frac{1}{2}f_{11}(0,0) y_1^2-f_{12}(0,0)y_1y_2-f_{22}(0,0) y_2^2+o(r^2), \quad \text{ where $r^2=y_1^2+y_2^2$}.
\]
We also have $|y|^2=y_1^2+y_2^2+y_3^2=y_1^2+y_2^2+o(r^2)$. Then, it follows that 
\[
\lim_{(y_1,y_2)=\lambda e_i, \lambda\to 0} \frac{\langle y, \nu\rangle}{|y|^2}=\frac{1}{2}\kappa_i,  \quad i=1,2
\]
where $e_i$ are the principal directions of $M$ at the origin with principal curvatures $\kappa_i$. Plugging back to \eqref{eq:surface2}, we infer that $\kappa_1=\kappa_2=1$ and consequently the origin is an \textbf{umbilic} point of $M$.  

\subsection{Radial graph}
As observed above, $M$ is star-shaped with respect to the origin. Moreover $M$ is locally convex near the origin, thus we can express it as a radial graph over the upper hemisphere $\mathbb{S}^2_{+}$, i.e., 
$X(q)=e^{\rho(q)} q$, $q\in \mathbb{S}^2_{+}$.  
Let $\{e_1, e_2\}$ be a local orthonormal frame on $\mathbb{S}^{2}_{+}$, 
we can write the induced metric on $M$ as 
\[
g_{ij}=e^{2\rho}(\delta_{ij}+\rho_i\rho_j).
\]
Its inverse is 
\[
g^{ij}=e^{-2\rho}(\delta_{ij}-\frac{\rho_i\rho_j}{1+|\nabla \rho|^2}).
\]
The outward unit normal is given by 
\[
\nu(q)=\frac{q-\nabla \rho(q)}{\sqrt{1+|\nabla \rho|^2}}.
\]
The second fundamental form is (write $\nabla$ as the covariant derivative)
\[
A_{ij}=-\langle X_{ij}, \nu\rangle=\frac{e^\rho\left(\delta_{i j}+ \nabla_i \rho \nabla_j \rho-\nabla_{i j} \rho\right)}{\sqrt{1+|\nabla \rho|^2}} .
\]
Then
\[A_{i}^{j}=A_{ik}g^{kj}=\frac{e^{-\rho}\delta_{i}^{j}}{\sqrt{1+|\nabla\rho|^{2}}}-e^{-\rho}\partial_{i}(\frac{\rho_{j}}{\sqrt{1+|\nabla\rho|^{2}}}),\]
hence 
\[
H=g^{ij} A_{ji}=e^{-\rho}\left( \frac{2}{\sqrt{1+|\nabla\rho|^2}}-\operatorname{div}(\frac{\nabla \rho}{\sqrt{1+|\nabla \rho|^2}})\right).
\]
Then \eqref{eq:surface1} becomes 
\begin{align} \label{eq: radial}
\operatorname{div}(\frac{\nabla \rho}{\sqrt{1+|\nabla \rho|^2}})=-\frac{1}{2} e^{2\rho}\sqrt{1+|\nabla \rho|^2}.
\end{align}

Again if we express $y\in M$ near the origin as the graph of a function $y_3=f(y_1,y_2)$ with $\nabla f(0)=0$. Let $x=\frac{y}{|y|}$ be the projection on $\mathbb{S}^{2}_{+}$, then $e^{\rho(x)}=|y|$. Let
\begin{equation*}
    (r,\phi)=\Big(\sqrt{y_{1}^{2}+y_{2}^{2}},\angle(y_{1}\vec{e}_{1}+y_{2}\vec{e}_{2},\vec{e}_{1})\Big)
\end{equation*}
be the polar coordinate in the $y_{1}-y_{2}$ plane, we then have 
\[
r^2+f(y_{1},y_{2})^2= e^{2\rho(x)}. 
\]
Since the origin is an umbilic point with principal curvature $1$, then as $(y_{1},y_{2})\to0$,
\begin{equation*}
    f(y_{1},y_{2})=\frac{1}{2} r^2+\mathcal{E}_{1}(y_{1},y_{2})=\frac{1}{2} r^2+o(r^2)\quad\mbox{with }D^{2}_{(y_{1},y_{2})}\mathcal{E}_{1}=o(1).
\end{equation*}
Then, in the polar coordinate $(r,\phi)$, we have
\begin{equation*}
    D_{(r,\phi)}\mathcal{E}_{1}=\begin{bmatrix}o(r)&o(r^{2})\end{bmatrix},\quad D^{2}_{(r,\phi)}\mathcal{E}_{1}=\begin{bmatrix}o(1)&o(r)\\o(r)&o(r^{2})\end{bmatrix}.
\end{equation*}
We apply the inverse function theorem, precisely treating $(f,\phi)$ as a new set of variables and writing $r$ as a function of $(f,\phi)$, then
\begin{equation*}
    \frac{\partial r}{\partial f}=\frac{1}{r+o(r)}=\frac{1+o(1)}{r},\quad\frac{\partial r}{\partial\phi}=\frac{-o(r^{2})}{r+o(r)}=o(r),
\end{equation*}
which means the following holds for $\mathcal{E}_{2}(f,\phi)=-2\mathcal{E}_{1}(r,\phi)$:
\[
r(f,\phi)^2=2f+\mathcal{E}_{2}(f,\theta)=2 f +o(f),\quad D_{(f,\phi)}\mathcal{E}_{2}=\begin{bmatrix}o(1)&o(f)\end{bmatrix}.
\]
Using $r^{2}+f^{2}=e^{2\rho}$, we have for $\mathcal{E}_{3}(f,\phi)=\mathcal{E}_{2}(f,\phi)+f^{2}$, the following holds:
\[
e^{2\rho(x)}=2 f +\mathcal{E}_{3}(f,\phi)=2 f +o(f),\quad D_{(f,\phi)}\mathcal{E}_{3}=\begin{bmatrix}o(1)&o(f)\end{bmatrix}.
\]
Also note that $f=e^{\rho(x)}\cdot \langle x, \vec{e_3}\rangle$ and $\phi=\angle(x_{1}\vec{e}_{1}+x_{2}\vec{e}_{2},\vec{e}_{1})$, thus we have 
\begin{align}
     \label{asym1}
e^{\rho(x)}&=2 \langle x, \vec{e}_3 \rangle +o(\langle x, \vec{e}_3\rangle),\quad \text{as } \langle x, \vec{e}_3\rangle\to 0,
\end{align}
and
\begin{align} \label{asym2}
e^{\rho} \nabla \rho &= 2 \vec{e}_3 +o(1),                              \quad  \text{as } \langle x, \vec{e}_3\rangle \to 0.
\end{align}

\subsection{Two algebraic lemmas}
To deal with minimal surface type equation \eqref{eq: radial}, we prove two algebraic lemmas. 
\begin{lemma}\label{lem. uniform elliptic in energy sense}
    Assume that $\vec{a},\vec{b}\in\mathbb{R}^{2}$ with $|\vec{a}|,|\vec{b}|\leq M$. Then, there exists some $\lambda>1$ depending only on $M$ such that
    \begin{equation*}
        \Big|\frac{\vec{b}}{\sqrt{1+|\vec{b}|^{2}}}-\frac{\vec{a}}{\sqrt{1+|\vec{a}|^{2}}}\Big|\leq\lambda|\vec{b}-\vec{a}|,
    \end{equation*}
    and
    \begin{equation*}
        \langle\frac{\vec{b}}{\sqrt{1+|\vec{b}|^{2}}}-\frac{\vec{a}}{\sqrt{1+|\vec{a}|^{2}}},\vec{b}-\vec{a}\rangle\geq\lambda^{-1}|\vec{b}-\vec{a}|^{2}.
    \end{equation*}
\end{lemma}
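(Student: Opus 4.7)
The plan is to recognize that the map
\[
F(\vec v)=\frac{\vec v}{\sqrt{1+|\vec v|^{2}}}
\]
is the gradient of the strictly convex area-type functional $\Phi(\vec v)=\sqrt{1+|\vec v|^{2}}$, and to reduce both inequalities to a bound on the eigenvalues of $\nabla^{2}\Phi=DF$ along the straight segment joining $\vec a$ and $\vec b$.

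First I would compute
\[
\partial_{i}\partial_{j}\Phi(\vec v)=\frac{\delta_{ij}}{\sqrt{1+|\vec v|^{2}}}-\frac{v_{i}v_{j}}{(1+|\vec v|^{2})^{3/2}}=\frac{1}{\sqrt{1+|\vec v|^{2}}}\left(\delta_{ij}-\frac{v_{i}v_{j}}{1+|\vec v|^{2}}\right),
\]
and diagonalize this symmetric matrix in the orthonormal frame consisting of $\vec v/|\vec v|$ and its orthogonal complement. The eigenvalue along $\vec v$ is $(1+|\vec v|^{2})^{-3/2}$ and the eigenvalue on $\vec v^{\perp}$ is $(1+|\vec v|^{2})^{-1/2}$. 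Consequently, for all $\vec v$ with $|\vec v|\leq M$, the Hessian $DF(\vec v)$ is positive definite with
\[
(1+M^{2})^{-3/2}\,\mathrm{Id}\preceq DF(\vec v)\preceq \mathrm{Id}.
\]

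Next I would use the fundamental theorem of calculus along the segment $\vec v(t)=\vec a+t(\vec b-\vec a)$, $t\in[0,1]$, which by convexity of the ball of radius $M$ stays inside $\{|\vec v|\leq M\}$. Writing
\[
F(\vec b)-F(\vec a)=\int_{0}^{1}DF(\vec v(t))(\vec b-\vec a)\,dt,
\]
the upper bound on $DF$ gives $|F(\vec b)-F(\vec a)|\leq|\vec b-\vec a|$, while pairing with $\vec b-\vec a$ and using the lower bound on $DF$ gives
\[
\langle F(\vec b)-F(\vec a),\vec b-\vec a\rangle\geq (1+M^{2})^{-3/2}|\vec b-\vec a|^{2}.
\]
Choosing $\lambda=(1+M^{2})^{3/2}>1$ then simultaneously yields both inequalities.

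There is no real obstacle here: the only minor care needed is the observation that the segment between $\vec a$ and $\vec b$ stays in the ball of radius $M$ (convexity), so that the uniform eigenvalue bound is valid along the whole path of integration. The statement is essentially the standard fact that the minimal surface operator is uniformly elliptic on regions where the gradient is a priori bounded, phrased at the level of the vector field $F$ rather than the PDE.
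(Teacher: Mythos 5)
Your proof is correct and follows essentially the same route as the paper's: both identify $\vec v\mapsto \vec v/\sqrt{1+|\vec v|^2}$ as the gradient of $\Phi(\vec v)=\sqrt{1+|\vec v|^2}$, bound the Hessian uniformly on the ball $|\vec v|\le M$, and integrate along the straight segment joining $\vec a$ and $\vec b$. The only difference is that you compute the explicit eigenvalues $(1+|\vec v|^2)^{-3/2}$ and $(1+|\vec v|^2)^{-1/2}$ and thus obtain the concrete constant $\lambda=(1+M^2)^{3/2}$, whereas the paper leaves the uniform convexity bound abstract.
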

\begin{proof}
    We notice that for $\vec{v}=x\cdot\vec{e}_{1}+y\cdot\vec{e}_{2}$, $\displaystyle\frac{\vec{v}}{\sqrt{1+|\vec{v}|^{2}}}$ is actually the gradient of $f(x,y)=\sqrt{1+x^{2}+y^{2}}$. We can easily verify the uniform convexity of $f$, that is:
    \begin{equation}\label{eq. uniform convex}
        \lambda(M)^{-1}I_{2}\leq D^{2}f(\vec{v})\leq\lambda(M)I_{2},\quad\mbox{as long as }|\vec{v}|\leq M.
    \end{equation}
    For $|\vec{a}|,|\vec{b}|\leq M$ with $L=|\vec{b}-\vec{a}|$, we consider their convex combination curve:
    \begin{equation*}
        \gamma(t)=(1-t)\vec{a}+t\vec{b}.
    \end{equation*}
    Then, $|\gamma(t)|\leq M$, $\gamma'(t)=\vec{b}-\vec{a}$, and
    \begin{equation*}
        \frac{\vec{b}}{\sqrt{1+|\vec{b}|^{2}}}-\frac{\vec{a}}{\sqrt{1+|\vec{a}|^{2}}}=\int_{0}^{1}D^{2}f(\gamma(t))\cdot\gamma'(t)dt=\int_{0}^{1}D^{2}f(\gamma(t))\cdot(\vec{b}-\vec{a})dt.
    \end{equation*}
    By the uniform convexity of $f$, we can prove the desired two inequalities.
\end{proof}
\begin{lemma}\label{lem. linear algebra}
    Assume that $\vec{a},\vec{b}\in T_{p}\mathbb{S}^{2}_{+}$, such that $|\vec{b}|\leq\lambda|\vec{a}|$ and $\langle\vec{b},\vec{a}\rangle\geq\lambda^{-1}|\vec{a}|^{2}$ for some $\lambda>1$. Then, there exists a self adjoint tensor $A\in T_{1}^{1}(T_{p}\mathbb{S}^{2}_{+})$, such that $\vec{b}=A\vec{a}$, and
    \begin{equation*}
        \{\Lambda^{-1}g_{ij}\}\leq\{A_{i}^{k}g_{kj}\}\leq\{\Lambda g_{ij}\}
    \end{equation*}
    for some $\Lambda>1$ depending only on $\lambda$.
\end{lemma}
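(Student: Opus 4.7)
The plan is to exploit the fact that in a $2$-dimensional inner product space, the self-adjoint endomorphisms form a $3$-dimensional space, while the linear equation $A\vec{a}=\vec{b}$ imposes only $2$ constraints, leaving a $1$-parameter family of solutions; the task is then to pick one with uniformly controlled eigenvalues.

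The degenerate case $\vec{a}=0$ is trivial, since $|\vec{b}|\leq\lambda|\vec{a}|$ forces $\vec{b}=0$, and one may take $A=\operatorname{id}$. For $\vec{a}\neq 0$, I would choose an orthonormal basis $\{e_{1},e_{2}\}$ of $T_{p}\mathbb{S}^{2}_{+}$ with $e_{1}=\vec{a}/|\vec{a}|$ and write $\vec{b}=b_{1}e_{1}+b_{2}e_{2}$. The two hypotheses translate into $b_{1}/|\vec{a}|\in[\lambda^{-1},\lambda]$ (in particular $b_{1}>0$, so division below is safe) and $|b_{2}|/|\vec{a}|\leq\lambda$. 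From the $1$-parameter family of self-adjoint extensions I would pick the explicit matrix
\[
A=\frac{1}{|\vec{a}|}\begin{pmatrix} b_{1} & b_{2} \\ b_{2} & b_{1}+b_{2}^{2}/b_{1}\end{pmatrix},
\]
for which $A\vec{a}=\vec{b}$ by direct inspection. The particular choice of the free $(2,2)$-entry is engineered so that $\det A=(b_{1}/|\vec{a}|)^{2}$ is a clean square, guaranteeing a positive determinant uniformly bounded below with no possibility of cancellation.

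Finally, I would verify the eigenvalue bounds: $\det A\geq\lambda^{-2}$, while $\operatorname{tr}A=2b_{1}/|\vec{a}|+b_{2}^{2}/(b_{1}|\vec{a}|)\leq 2\lambda+\lambda^{3}$, and $A$ is positive definite (both leading principal minors are positive). Hence the eigenvalues $\mu_{\min}\leq\mu_{\max}$ satisfy $\mu_{\max}\leq\operatorname{tr}A$ and $\mu_{\min}\geq\det A/\operatorname{tr}A$, giving $\mu_{\min},\mu_{\max}\in[\Lambda^{-1},\Lambda]$ for a suitable $\Lambda=\Lambda(\lambda)$ (e.g.\ $\Lambda=2\lambda^{3}+\lambda^{5}$ suffices). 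Translating into tensor language, the matrix of $A_{i}^{k}g_{kj}$ with respect to the chosen orthonormal frame coincides with the matrix above, which yields the desired two-sided comparison with $g_{ij}$. The only genuine design choice is the free $(2,2)$-entry; any selection that forces a uniform positive lower bound on the determinant without inflating the trace would suffice, and the explicit expression above appears to be the cleanest such choice.
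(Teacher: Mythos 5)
Your proof is correct and follows the same basic strategy as the paper: fix an orthonormal frame with $e_{1}$ aligned with $\vec{a}$, note that a self-adjoint endomorphism with prescribed first column $\vec{b}$ has exactly one free diagonal entry, and tune that entry to force uniform ellipticity. The difference is the choice of that free entry. The paper sets the $(2,2)$-entry to $3\lambda$, whereas you take $b_{1}+b_{2}^{2}/b_{1}$ (in the $|\vec{a}|=1$ normalization), engineered so that $\det A=b_{1}^{2}\geq\lambda^{-2}$ is automatically positive. Your choice is in fact the more robust one: with the paper's $3\lambda$, the determinant equals $3\lambda b^{1}-(b^{2})^{2}$, which can be negative for large $\lambda$ (take $\lambda=3$, $\vec{a}=\vec{e}_{1}$, $\vec{b}=\tfrac{1}{3}\vec{e}_{1}+2\vec{e}_{2}$, giving $\det=-1$), so the stated two-sided bound $\tfrac{\lambda^{-1}}{2}\delta_{ij}\leq A_{i}^{k}\delta_{kj}\leq 4\lambda\delta_{ij}$ fails as written and the $(2,2)$-entry would need to be enlarged to something like $2\lambda^{3}$. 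Your eigenvalue estimates via $\mu_{\max}\leq\operatorname{tr}A$ and $\mu_{\min}\geq\det A/\operatorname{tr}A$ are correct, and a constant of the form $\Lambda=2\lambda^{3}+\lambda^{5}$ works; either way the lemma's qualitative conclusion (existence of $\Lambda$ depending only on $\lambda$) holds, and your argument establishes it cleanly.
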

\begin{proof}
    Without loss of generality, we assume that $T_{p}\mathbb{S}^{2}_{+}=\mbox{span}(\vec{e}_{1},\vec{e}_{2})$ and that $\vec{a}=\vec{e}_{1}$. Then, we have $\vec{b}=b^{i}\vec{e}_{i}$ with $b^{1}\geq\lambda^{-1}$ and $|\vec{b}|\leq\lambda$. Let $(\vec{\eta}^{1},\vec{\eta}^{2})$ be a co-frame dual to $(\vec{e}_{1},\vec{e}_{2})$, then we let
    \begin{equation*}        A=b^{1}\vec{\eta}^{1}\otimes\vec{e}_{1}+b^{2}\vec{\eta}^{1}\otimes\vec{e}_{2}+b^{2}\vec{\eta}^{2}\otimes\vec{e}_{1}+3\lambda\vec{\eta}^{2}\otimes\vec{e}_{2}.
    \end{equation*}
    One can easily see that $A$ is self-adjoint with $\vec{b}=A\vec{a}$. Moreover, we have for $g_{ij}=\delta_{ij}$
    \begin{equation*}
        \{\frac{\lambda^{-1}}{2}\delta_{ij}\}\leq\{A_{i}^{k}\delta_{kj}\}\leq\{4\lambda\delta_{ij}\}.
    \end{equation*}
    Therefore, we can choose $\Lambda=4\lambda$.
\end{proof}

\section{Proof of Theorem~\ref{Tmain}}

\begin{proof}[Proof of Theorem~\ref{Tmain}]
As indicated in the Introduction, we shall use the method of moving plane (rotating circle to be more precise) to prove the rotational symmetry of the solution $\rho(x)$ of the following Dirichlet problem on $\mathbb{S}^2_{+}$:
\begin{align}  
\left\{
  \begin{array}{ll}
    \operatorname{div}(\frac{\nabla \rho(x)}{\sqrt{1+|\nabla \rho(x)|^2}})=-\frac{1}{2} e^{2\rho(x)}\sqrt{1+|\nabla \rho(x)|^2} , &  \quad x\in 
    \mathbb{S}^2_{+}, \\ 
    e^{\rho(x)}=0, & \quad x\in \partial \mathbb{S}^2_{+},
  \end{array}
\right.
\end{align}
with additional asymptotic assumptions \eqref{asym1} and \eqref{asym2}.

{\bf 1. Setup of the rotating circles}

\begin{definition}\label{def. moving plane setting}
    Let $\theta\in(0,\frac{\pi}{2})$ and let $\vec{v}_{\theta}=\cos{\theta}\vec{e}_{3}-\sin{\theta}\vec{e}_{1}$. Let
    \begin{equation*}
        T_{\theta}=\{x\in\mathbb{S}^{2}_{+}:\ \langle x,\vec{v}_{\theta}\rangle=0\},\quad\Sigma_{\theta}=\{x\in\mathbb{S}^{2}_{+}:\ \langle x,\vec{v}_{\theta}\rangle<0\}.
    \end{equation*}
$T_{\theta}$ is the half circle that we are going to rotate. 
    Let $x_{\theta}\in\Sigma_{2\theta}\setminus\Sigma_{\theta}$ be the reflection of $x$ about $T_{\theta}$, namely:
    \begin{equation*}
        x_{\theta}=x-2\langle x,\vec{v}_{\theta}\rangle\vec{v}_{\theta}.
    \end{equation*}
    Let $\rho_{\theta}(x)=\rho(x_{\theta})$ and $w_{\theta}(x)=\rho_{\theta}(x)-\rho(x)$ for $x\in\Sigma_{\theta}$. For every $\sigma>0$, we denote:
    \begin{equation*}
        w_{\theta,\sigma}=w_{\theta}+\max\Big\{\frac{\sigma}{\langle x,\vec{e}_{3}\rangle}-1,0\Big\},\quad\Omega_{\theta,\sigma}=\{x\in\Sigma_{\theta}:w_{\theta,\sigma}(x)<0\}.
    \end{equation*}
\end{definition}

\begin{figure}[htbp]
    \centering
    \includegraphics[width=0.5\linewidth]{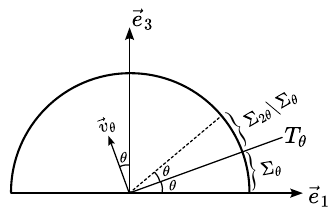}
\end{figure}

{\bf 2. Initiating the moving circle}
\begin{lemma}\label{lem. strong inclusion}
    For every $\theta\in(0,\frac{\pi}{2})$, there exists some $\overline{\sigma}>0$ (depending on $\theta$), such that for every $\sigma\in(0,\overline{\sigma})$, we have
    \begin{equation}\label{eq. boundary is non-negative}
        \liminf_{x\to y}w_{\theta,\sigma}(x)\geq0,\quad\mbox{for every }y\in\partial\Sigma_{\theta}.
    \end{equation}
    Besides, the distance between $\Omega_{\theta,\sigma}$ and $\partial\mathbb{S}^{2}_{+}$ is strictly positive.
\end{lemma}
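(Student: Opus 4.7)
The plan is to exploit the asymptotic expansion \eqref{asym1}, which pins down $\rho$ near $\partial\mathbb{S}^2_+$ to leading order. Write $\rho(x) = \log(2\langle x, \vec{e}_3\rangle) + \eta(x)$, so that by \eqref{asym1} the remainder satisfies $\eta(x)\to 0$ uniformly as $\langle x, \vec{e}_3\rangle \to 0$. Fix $\epsilon\in(0,\tfrac{1}{2})$ and choose $\delta = \delta(\epsilon)>0$ such that $|\eta| < \epsilon$ on $\{\langle\cdot, \vec{e}_3\rangle < \delta\}$; eventually I would take $\overline{\sigma}<\delta$. A direct computation in $\Sigma_\theta$ shows that $\langle x_\theta, \vec{e}_3\rangle - \langle x, \vec{e}_3\rangle = -2\langle x, \vec{v}_\theta\rangle\cos\theta > 0$, so reflection across $T_\theta$ pushes points of $\Sigma_\theta$ strictly higher in the $\vec{e}_3$-direction, with equality only on $T_\theta$; in particular $x_\theta$ stays inside $\mathbb{S}^2_+$.

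For the first assertion, I would decompose $\partial\Sigma_\theta$ into three types: (i) interior points of the half-circle $T_\theta$, (ii) arcs of $\partial\mathbb{S}^2_+$ where $\langle y, \vec{v}_\theta\rangle<0$ strictly, and (iii) the two corner points $T_\theta\cap\partial\mathbb{S}^2_+ = \{(0,\pm 1, 0)\}$. Case (i) is immediate: continuity of $\rho$ combined with $x_\theta=y$ at $y\in T_\theta$ gives $w_\theta\to 0$, while the max-term is non-negative. In case (ii), $y_\theta$ lies in the open hemisphere, so $\rho_\theta(x)\to\rho(y_\theta)$ is bounded while $\rho(x)\to-\infty$, hence $w_\theta\to+\infty$. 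In case (iii), both $\rho(x)$ and $\rho_\theta(x)$ diverge to $-\infty$, but expanding via the asymptotic one has
\[
    w_\theta(x) = \log\frac{\langle x_\theta, \vec{e}_3\rangle}{\langle x, \vec{e}_3\rangle} + \eta(x_\theta) - \eta(x) \geq -2\epsilon,
\]
while the corrective term $\sigma/\langle x, \vec{e}_3\rangle - 1$ tends to $+\infty$; thus $\liminf w_{\theta,\sigma}\geq 0$ at the corners as well.

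For the distance claim I would show $\Omega_{\theta,\sigma}\cap\{\langle\cdot,\vec{e}_3\rangle<\sigma/2\} = \emptyset$ once $\overline{\sigma}$ is small. For any such $x$, $\sigma/\langle x, \vec{e}_3\rangle - 1 > 1$. If $\langle x_\theta, \vec{e}_3\rangle<\delta$, the lower bound above gives $w_\theta\geq -2\epsilon$, hence $w_{\theta,\sigma}\geq 1-2\epsilon > 0$; if instead $\langle x_\theta, \vec{e}_3\rangle \geq \delta$, then $\rho(x_\theta)$ is bounded below by some constant $C(\delta)$ by continuity of $\rho$ on the compact set $\{\langle\cdot,\vec{e}_3\rangle\geq\delta\}$, while $\rho(x)\leq\log\sigma + \epsilon$, so $w_\theta\geq C(\delta) - \log\sigma - \epsilon$ which is positive once $\overline{\sigma}$ is small enough. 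Either way $x\notin\Omega_{\theta,\sigma}$, so $\Omega_{\theta,\sigma}\subset\{\langle\cdot,\vec{e}_3\rangle\geq\sigma/2\}$, which is at strictly positive geodesic distance from $\partial\mathbb{S}^2_+$.

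The main subtlety is case (iii), where both $\rho$ and $\rho_\theta$ blow down simultaneously and $w_\theta$ alone may barely fail to be non-negative because of the $o(1)$ error in the asymptotic expansion. The purpose of the additive correction $\max\{\sigma/\langle x, \vec{e}_3\rangle - 1, 0\}$ in the definition of $w_{\theta,\sigma}$ is precisely to dominate this error at the corners and throughout a neighborhood of the equator. Everything else reduces to compactness of $\mathbb{S}^2_+$ and the smoothness of $\rho$ in its open interior.
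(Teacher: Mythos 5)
Your proof is correct and rests on the same core mechanism as the paper's: near the equator the penalty term $\sigma/\langle x,\vec{e}_{3}\rangle-1$ blows up faster than $|\rho|$ and $|\rho_\theta|$, so $\Omega_{\theta,\sigma}$ cannot reach $\partial\mathbb{S}^{2}_{+}$ and the liminf along the equator is automatically nonnegative, while on $T_\theta$ the reflection is the identity. The paper gets both conclusions at once from the cruder one-line estimate $|\rho|,|\rho_\theta|=O\bigl(|\ln\langle x,\vec{e}_{3}\rangle|\bigr)=o\bigl(1/\langle x,\vec{e}_{3}\rangle\bigr)$, which immediately yields $\Omega_{\theta,\sigma}\subseteq\{\langle x,\vec{e}_{3}\rangle\geq h(\sigma)\}$, and then handles $y\in T_\theta$ by a bounded-gradient argument; your version instead decomposes $\partial\Sigma_\theta$ into the interior of $T_\theta$, the open equatorial arc, and the two corner points, and uses the sharper one-sided bound $w_\theta\geq-2\epsilon$ near the corners, obtained from the monotonicity $\langle x_\theta,\vec{e}_{3}\rangle\geq\langle x,\vec{e}_{3}\rangle$ together with the precise leading-order asymptotic $\rho=\log(2\langle x,\vec{e}_{3}\rangle)+o(1)$ rather than the mere logarithmic growth bound. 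Both are fine; the paper's route is a bit more economical (one inclusion does double duty, and only the growth rate of $\rho$ is needed), whereas your route makes the role of the correction term at the corners, where $\rho$ and $\rho_\theta$ blow down simultaneously, more transparent.
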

\begin{proof}
    In fact, as $\langle x,\vec{e}_{3}\rangle\to0$, the following uniform estimate holds:
    \begin{equation*}
        |\rho_{\theta}(x)|,|\rho(x)|=O(|\ln{\langle x,\vec{e}_{3}\rangle}|)=o(\frac{1}{\langle x,\vec{e}_{3}\rangle}).
    \end{equation*}
    Therefore, $\Omega_{\theta,\sigma}\subseteq\Sigma_{\theta}\cap\{\langle x,\vec{e}_{3}\rangle\geq h(\sigma)\}$. In particular, we have shown \eqref{eq. boundary is non-negative} if $y\in\partial\mathbb{S}^{2}_{+}$. Besides, as $x_{\theta}=x$ on $T_{\theta}$, we have
    \begin{equation*}
        w_{\theta,\sigma}=\max\Big\{\frac{\sigma}{\langle x,\vec{e}_{3}\rangle}-1,0\Big\}\geq0\mbox{ on }T_{\theta}.
    \end{equation*}
    As we can easily verify that $w_{\theta,\sigma}$ has a bounded gradient in the region $\Sigma_{\theta}\cap\{\langle x,\vec{e}_{3}\rangle\geq h(\sigma)\}$, this implies \eqref{eq. boundary is non-negative} when $y\in T_{\theta}$.
\end{proof}
\begin{lemma} \label{L1}
    There exists some $\epsilon>0$, such that for all $\theta<\epsilon$, we have $w_{\theta}(x)\geq0$ in $\Sigma_{\theta}$.
\end{lemma}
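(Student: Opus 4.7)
I will show that, for all $\theta$ sufficiently small, the set $\Omega_{\theta,\sigma}$ is empty for every admissible $\sigma\in(0,\overline\sigma(\theta))$ provided by Lemma \ref{lem. strong inclusion}. Since $w_{\theta,\sigma}\to w_\theta$ pointwise in $\Sigma_\theta\cap\{\langle x,\vec e_3\rangle>0\}$ as $\sigma\to 0^+$, this delivers $w_\theta\geq 0$ on $\Sigma_\theta$.

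\textbf{Linearization and smallness of coefficients.} Rotation is an isometry of $\mathbb S^2$, so $\rho_\theta$ satisfies the same equation \eqref{eq: radial} on $\Sigma_{2\theta}$. On $\Omega_{\theta,\sigma}$, Lemma \ref{lem. strong inclusion} keeps us at positive distance from $\partial\mathbb S^2_+$, hence $|\nabla\rho|$ and $|\nabla\rho_\theta|$ are finite there, and Lemmas \ref{lem. uniform elliptic in energy sense} and \ref{lem. linear algebra} convert the difference of the two PDEs into
\begin{equation*}
    \operatorname{div}(A\nabla w_\theta)=-\alpha\,w_\theta-\vec\beta\cdot\nabla w_\theta,
\end{equation*}
with $A$ uniformly elliptic and self-adjoint, $\alpha=e^{2c(x)}\sqrt{1+|\nabla\rho_\theta|^2}$ for some $c(x)$ between $\rho(x)$ and $\rho_\theta(x)$, and $\vec\beta=\tfrac{1}{2}e^{2\rho}\int_0^1\tfrac{\nabla\rho_t}{\sqrt{1+|\nabla\rho_t|^2}}\,dt$ arising from the mean value decomposition of the two nonlinear pieces. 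On $\Sigma_\theta$ we have $\langle x,\vec e_3\rangle\leq\tan\theta$, and by the explicit reflection formula $\langle x_\theta,\vec e_3\rangle\geq\langle x,\vec e_3\rangle$. Combining this with \eqref{asym1}, \eqref{asym2}, and the inequality $c\leq\rho$ on $\Omega_{\theta,\sigma}$ (a consequence of $w_\theta<0$ there) yields the uniform-in-$\sigma$ bounds $|\vec\beta|\lesssim e^{2\rho}=O(\theta^2)$ and $\alpha\lesssim e^{2\rho}+e^{2\rho}/e^{\rho_\theta}\lesssim\theta^2+\langle x,\vec e_3\rangle=O(\theta)$.

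\textbf{Energy inequality and Poincar\'e.} I test the linearized equation against $-w_{\theta,\sigma}$, which is nonnegative in $\Omega_{\theta,\sigma}$ and vanishes on $\partial\Omega_{\theta,\sigma}$: on the level-set part by definition; on $T_\theta$ because $w_\theta=0$ there gives $w_{\theta,\sigma}=\psi\geq 0$, so $\Omega_{\theta,\sigma}$ avoids $T_\theta$ altogether; and $\partial\mathbb S^2_+$ is avoided by Lemma \ref{lem. strong inclusion}. Integration by parts, ellipticity of $A$, Cauchy--Schwarz, and the smallness of $\alpha,\vec\beta$ produce an estimate of the form
\begin{equation*}
    \int_{\Omega_{\theta,\sigma}}|\nabla w_\theta|^2\lesssim\theta\int_{\Omega_{\theta,\sigma}}w_{\theta,\sigma}^2+\theta^2\int_{\Omega_{\theta,\sigma}}|\nabla w_\theta|\,|w_{\theta,\sigma}|+\mbox{(cutoff terms from }\nabla\psi\mbox{).}
\end{equation*}
Since $\Omega_{\theta,\sigma}\subset\Sigma_\theta$ lies inside a spherical lune of width $\theta$ perpendicular to $T_\theta$, the Dirichlet Poincar\'e inequality gives $\int w_{\theta,\sigma}^2\lesssim\theta^2\int|\nabla w_{\theta,\sigma}|^2$; a Cauchy--Schwarz absorption of the first-order term then produces $\int|\nabla w_{\theta,\sigma}|^2\leq C\theta^3\int|\nabla w_{\theta,\sigma}|^2$. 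For $\theta<\epsilon$ sufficiently small this closes to force $\nabla w_{\theta,\sigma}\equiv 0$ on $\Omega_{\theta,\sigma}$, and then $w_{\theta,\sigma}\equiv 0$ by the boundary values, contradicting $w_{\theta,\sigma}<0$. Hence $\Omega_{\theta,\sigma}$ is empty.

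\textbf{Main obstacle.} The most delicate technical step is handling the gradient of the cutoff $\psi=\max\{\sigma/\langle x,\vec e_3\rangle-1,0\}$: its derivative can be singular as $\sigma\to 0$, yet all resulting contributions must be controlled uniformly in $\sigma$ and subordinated to the $O(\theta^3)$ savings. The structural reason the argument closes is a ``double smallness'' on $\Sigma_\theta$ --- the coefficients $\alpha,\vec\beta$ are $O(\theta)$ thanks to \eqref{asym1}--\eqref{asym2}, and the Poincar\'e constant is $O(\theta)$ thanks to the thin geometry of the lune.
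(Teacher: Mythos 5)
Your proposal has a genuine gap in the core energy step. You claim that Lemmas~\ref{lem. uniform elliptic in energy sense} and \ref{lem. linear algebra} produce a self-adjoint $A$ that is \emph{uniformly} elliptic on $\Omega_{\theta,\sigma}$, and your final absorption $\int|\nabla w_{\theta,\sigma}|^2\leq C\theta^3\int|\nabla w_{\theta,\sigma}|^2$ implicitly requires the ellipticity constant (hence $C$) to be independent of $\sigma$. But Lemma~\ref{lem. uniform elliptic in energy sense} only gives an ellipticity constant $\lambda(M)^{-1}$ that \emph{degenerates} as $M=\sup\bigl(|\nabla\rho|,|\nabla\rho_\theta|\bigr)\to\infty$ (the smallest eigenvalue of $D^2\sqrt{1+|v|^2}$ decays like $|v|^{-3}$), and by \eqref{asym2} the gradients blow up like $\langle x,\vec e_3\rangle^{-1}$ near $\partial\mathbb{S}^2_+$. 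Lemma~\ref{lem. strong inclusion} keeps $\Omega_{\theta,\sigma}$ at distance $h(\sigma)$ from $\partial\mathbb{S}^2_+$, but $h(\sigma)\to 0$ as $\sigma\to0$, so your constant $C=C(\sigma)$ blows up and no single $\epsilon>0$ can close the argument for every $\sigma\in(0,\overline\sigma)$. The refined coefficient bounds $\alpha=O(\theta)$, $|\vec\beta|=O(\theta^2)$ you derive are correct but cannot compensate for this loss, since the degeneracy sits in the principal part.

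The paper's proof avoids the degeneracy entirely by refusing to linearize the principal part and instead working in $L^1$ with the directional structure of the gradients. Using \eqref{asym1}--\eqref{asym2} and the reflection formula, one sees that $\tfrac{\nabla\rho}{\sqrt{1+|\nabla\rho|^2}}$ and $\tfrac{\nabla\rho_\theta}{\sqrt{1+|\nabla\rho_\theta|^2}}$ converge to the \emph{opposite} unit vectors $\vec e_3$ and $-\vec e_3$ as $\langle x,\vec e_3\rangle\to 0$; thus their difference tends to $-2\vec e_3$ and stays bounded away from zero uniformly in $\sigma$, while $\nabla w_{\theta,\sigma}=|\nabla w_{\theta,\sigma}|\bigl(-\vec e_3+o(1)\bigr)$ (including the cutoff gradient, which also points in $-\vec e_3$). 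This yields the pointwise lower bound
\begin{equation*}
\Bigl\langle \nabla w_{\theta,\sigma},\ \tfrac{\nabla\rho_\theta}{\sqrt{1+|\nabla\rho_\theta|^2}}-\tfrac{\nabla\rho}{\sqrt{1+|\nabla\rho|^2}}\Bigr\rangle\geq\bigl(2+o(1)\bigr)\,|\nabla w_{\theta,\sigma}|,
\end{equation*}
which is an $L^1$-type coercivity with no $\sigma$-dependence; combined with the crude but uniform $O(1)$ bound on the right-hand side of the difference equation and the $L^1$-Poincar\'e inequality on the thin lune $\Sigma_\epsilon$, it produces $\int|\nabla w_{\theta,\sigma}|\leq C\int|w_{\theta,\sigma}|\leq C\epsilon\int|\nabla w_{\theta,\sigma}|$, which is the contradiction you were after. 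If you wish to retain an $L^2$ framework, you would need to split off a boundary collar and use the paper's directional argument there, reserving the ellipticity-based estimate for the interior — but for Lemma~\ref{L1} the whole domain $\Sigma_\theta$ is a boundary collar, so the directional argument is the whole proof.
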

\begin{remark}
    In fact, we even have that $w_{\theta}(x)>0$ in $\Sigma_{\theta}$ by the strong maximum principle. Such a strict inequality will be proven soon in the proof of Lemma~\ref{L2}.
\end{remark}
\begin{proof}[Proof of Lemma~\ref{L1}]
    It suffices to show $w_{\theta,\sigma}\geq0$ for any $\sigma\in(0, \bar{\sigma})$. Suppose that this is not true for some $\sigma>0$, then $\Omega_{\theta,\sigma}\neq\emptyset$. We multiply on both sides of
    \begin{equation*}
        \mathrm{div}(\frac{\nabla\rho}{\sqrt{1+|\nabla\rho|^{2}}})-\mathrm{div}(\frac{\nabla\rho_{\theta}}{\sqrt{1+|\nabla\rho_{\theta}|^{2}}})=\frac{e^{2\rho_{\theta}}}{2}\sqrt{1+|\nabla\rho_{\theta}|^{2}}-\frac{e^{2\rho}}{2}\sqrt{1+|\nabla\rho|^{2}}=O(1),
    \end{equation*}
    by $w_{\theta,\sigma}$ and integrate on $\Omega_{\theta,\sigma}$. By Lemma~\ref{lem. strong inclusion}, no boundary term is produced when we integrate by parts, so we obtain
    \begin{equation}\label{eq. energy inequality for small theta}
        \int_{\Omega_{\theta,\sigma}}\langle\nabla w_{\theta,\sigma},\frac{\nabla\rho_{\theta}}{\sqrt{1+|\nabla\rho_{\theta}|^{2}}}-\frac{\nabla\rho}{\sqrt{1+|\nabla\rho|^{2}}}\rangle dx\leq C\int_{\Omega_{\theta,\sigma}}|w_{\theta,\sigma}|dx.
    \end{equation}
    Let $\epsilon$ be sufficiently small. For $x\in\Sigma_{\theta}$, we have
    \begin{equation*}
        \nabla\max\Big\{\frac{\sigma}{\langle x,\vec{e}_{3}\rangle}-1,0\Big\}=-\frac{\sigma\chi_{\langle x,\vec{e}_{3}\rangle<\sigma}}{\langle x,\vec{e}_{3}\rangle^{2}}\vec{e}_{3},
    \end{equation*}
    and
    \begin{equation*}
        \nabla\rho(x)=\frac{1}{\langle x,\vec{e}_{3}\rangle}\Big(\vec{e}_{3}+o(1)\Big),\quad\nabla\rho(x_{\theta})=\frac{1}{\langle x_{\theta},\vec{e}_{3}\rangle}\Big(\vec{e}_{3}+o(1)\Big),
    \end{equation*}
    where we have used $x,x_{\theta}\in\Sigma_{2\epsilon}$. Here, $o(1)\to0$ as $\epsilon\to0$. This implies
    \begin{equation*}
        \nabla\rho_{\theta}(x)=\nabla\rho(x_{\theta})-2\langle\nabla\rho(x_{\theta}),\vec{v}_{\theta}\rangle\vec{v}_{\theta}=\frac{1}{\langle x_{\theta},\vec{e}_{3}\rangle}\Big(-\vec{e}_{3}+o(1)\Big).
    \end{equation*}
    Consequently, for  $\epsilon$ sufficiently small, we have
    \begin{equation*}
        \nabla w_{\theta,\sigma}=|\nabla w_{\theta,\sigma}|\Big(-\vec{e}_{3}+o(1)\Big),\quad\frac{\nabla\rho_{\theta}}{\sqrt{1+|\nabla\rho_{\theta}|^{2}}}-\frac{\nabla\rho}{\sqrt{1+|\nabla\rho|^{2}}}=-2\vec{e}_{3}+o(1).
    \end{equation*}
    Then \eqref{eq. energy inequality for small theta} implies
    \begin{equation*}
        \int_{\Omega_{\theta,\sigma}}|\nabla w_{\theta,\sigma}|dx\leq C\int_{\Omega_{\theta,\sigma}}|w_{\theta,\sigma}|dx.
    \end{equation*}
    However, as $\Omega_{\theta,\sigma}\subseteq\Sigma_{\theta}\subseteq\Sigma_{\epsilon}$, we have from the Poincar\'e inequality (recall that $w_{\theta,\sigma}=0$ on $\partial\Omega_{\theta,\sigma}$) that
    \begin{equation*}
        \int_{\Omega_{\theta,\sigma}}|\nabla w_{\theta,\sigma}|dx\geq\frac{C^{-1}}{\epsilon}\int_{\Omega_{\theta,\sigma}}|w_{\theta,\sigma}|dx.
    \end{equation*}
    We then see $w_{\theta,\sigma}\equiv0$ almost everywhere in $\Omega_{\theta,\sigma}$, a contradiction.
\end{proof}

{\bf 3. Continuous rotation of $T_{\theta}$ and the rotational symmetry}
\begin{lemma} \label{L2}
    Suppose $w_{\theta}\geq0$ in $\Sigma_{\theta}$ for some $\theta\in(0,\frac{\pi}{2})$, then there exists some $\overline{\epsilon}>0$ (depending on $\theta$), such that   $\forall \epsilon\in(0,\overline{\epsilon})$, we have
\[
w_{\theta+\epsilon}(x)\geq0,   \quad  \forall x\in \Sigma_{\theta+\epsilon}. 
\]
\end{lemma}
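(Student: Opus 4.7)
The plan is to upgrade $w_\theta\geq0$ to $w_\theta>0$ via the strong maximum principle and Hopf lemma, then use $C^1$ continuity in $\theta$ to propagate positivity to $w_{\theta+\epsilon}$ for small $\epsilon$, arguing by contradiction along a sequence $\epsilon_k\downarrow0$.

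First, I would cast the equation for $w_\theta$ as a linear uniformly elliptic equation. Subtracting the equations satisfied by $\rho$ and $\rho_\theta$, the principal part
\begin{equation*}
\operatorname{div}\Bigl(\frac{\nabla\rho_\theta}{\sqrt{1+|\nabla\rho_\theta|^2}}-\frac{\nabla\rho}{\sqrt{1+|\nabla\rho|^2}}\Bigr)
\end{equation*}
can, by Lemmas~\ref{lem. uniform elliptic in energy sense} and \ref{lem. linear algebra}, be rewritten as $\operatorname{div}(A\nabla w_\theta)$ for a self-adjoint tensor $A$ that is uniformly elliptic on any compact subset of $\Sigma_\theta$ where $|\nabla\rho|,|\nabla\rho_\theta|$ are bounded, and the right-hand side is linear in $w_\theta,\nabla w_\theta$ with bounded coefficients. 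The strong maximum principle then yields either $w_\theta\equiv0$ in $\Sigma_\theta$—a case in which $\rho$ inherits a reflection symmetry about $T_\theta$ and is handled by a separate symmetry argument feeding into the overall proof of Theorem~\ref{Tmain}—or $w_\theta>0$ strictly. In the latter case the Hopf lemma, applied at smooth points of $T_\theta$ away from $\partial\mathbb{S}^2_+$, produces a uniform lower bound $\partial w_\theta/\partial\mathbf{n}\geq c_0>0$.

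Next, suppose for contradiction that $\Omega_{\theta+\epsilon_k,\sigma_k}\neq\emptyset$ for some $\epsilon_k\downarrow0$ and $\sigma_k\in(0,\bar\sigma(\theta+\epsilon_k))$. Choose a compact $K\subset\subset\Sigma_\theta$ whose complement in $\overline{\Sigma_\theta}$ is a thin neighborhood of $\partial\Sigma_\theta$, and pick $\delta>0$ with $w_\theta\geq\delta$ on $K$. Smooth dependence of the reflection $x\mapsto x_{\theta+\epsilon}$ on $\epsilon$ combined with interior regularity of $\rho$ gives $w_{\theta+\epsilon}\to w_\theta$ in $C^1_{\mathrm{loc}}(\overline{\Sigma_\theta}\setminus\partial\mathbb{S}^2_+)$, so for $k$ large $w_{\theta+\epsilon_k}\geq\delta/2$ on $K\cap\Sigma_{\theta+\epsilon_k}$; hence $\Omega_{\theta+\epsilon_k,\sigma_k}$ is confined to the narrow boundary strip $\Sigma_{\theta+\epsilon_k}\setminus K$. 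Near $\partial\mathbb{S}^2_+$ the cutoff term $\max\{\sigma_k/\langle x,\vec{e}_3\rangle-1,0\}$ in the definition of $w_{\theta+\epsilon_k,\sigma_k}$ rescues positivity exactly as in Lemma~\ref{lem. strong inclusion}. Near $T_{\theta+\epsilon_k}$ the $C^1$-convergence transfers the Hopf estimate to $\partial w_{\theta+\epsilon_k}/\partial\mathbf{n}\geq c_0/2$, forcing $w_{\theta+\epsilon_k}>0$ in a uniform tubular neighborhood of $T_{\theta+\epsilon_k}$. Both boundary pieces of the strip are therefore strictly positive, so $\Omega_{\theta+\epsilon_k,\sigma_k}=\emptyset$, the desired contradiction.

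The delicate point is the uniform $C^1$-convergence $w_{\theta+\epsilon}\to w_\theta$ up to the interior boundary portion $T_\theta$, which is what lets the Hopf lower bound transfer to $T_{\theta+\epsilon}$ with a definite constant independent of small $\epsilon$. This should follow from interior quasilinear Schauder estimates for equation \eqref{eq: radial}—which is uniformly elliptic on compact subsets away from $\partial\mathbb{S}^2_+$ by Lemma~\ref{lem. uniform elliptic in energy sense}—combined with the smooth dependence of the reflection map on $\theta$ and the fact that $T_\theta$ is a smooth interior hypersurface of $\mathbb{S}^2_+$ on any compact subset away from $\partial\mathbb{S}^2_+$.
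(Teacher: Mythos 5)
Your proposal takes a genuinely different route from the paper. The paper decomposes $\Sigma_{\theta+\epsilon}$ into $\Sigma_{\theta-\delta}$ (handled by a direct asymptotic comparison of $e^{\rho(x_{\gamma})}$ versus $e^{\rho(x)}$ near $\partial\mathbb{S}^2_+$, plus the strong maximum principle on a compact set) and the thin strip $\Sigma_{\theta+\epsilon}\setminus\Sigma_{\theta-\delta}$ (handled by an energy estimate and a narrow-domain Poincar\'e inequality, with a further split by height in order to use the asymptotics \eqref{asym1}--\eqref{asym2} where the gradient blows up). You instead try to propagate positivity pointwise via the Hopf lemma on $T_\theta$ and $C^1_{\mathrm{loc}}$ continuity in $\theta$.

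There are two genuine gaps in your argument. The first is that you do not control the region that lies near $\partial\mathbb{S}^2_+$ but \emph{above} the cutoff zone and away from $T_{\theta+\epsilon}$. The cutoff only forces $\Omega_{\theta+\epsilon,\sigma}\subseteq\{\langle x,\vec{e}_3\rangle\geq h(\sigma)\}$, and $h(\sigma)\to0$ as $\sigma\to0$. Once you fix a compact $K\subset\subset\Sigma_\theta$ (needed to get $w_\theta\geq\delta$ and to make the Hopf/$C^1$ argument uniform), the band $\{h(\sigma)\leq\langle x,\vec{e}_3\rangle\leq\mathrm{dist}(K,\partial\mathbb{S}^2_+)\}\cap\Sigma_{\theta-\delta}$ is covered neither by $K$ nor by the cutoff nor by any tubular neighborhood of $T_{\theta+\epsilon}$, and your proof says nothing about positivity there. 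The paper's Step 1 treats exactly this band by comparing $e^{\rho(x_\gamma)}$ with $e^{\rho(x)}$ via \eqref{asym1}, using the quantitative fact that for $x\in\Sigma_{\theta-\delta}$ the reflection increases $\langle x,\vec{e}_3\rangle$ by a definite factor $1+c_1(\delta)$; your proposal omits this ingredient entirely.

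The second gap is the one you yourself flag as ``the delicate point.'' The $C^1$ convergence $w_{\theta+\epsilon}\to w_\theta$ and the Hopf lower bound on $\partial w_\theta/\partial\mathbf{n}$ degenerate as you approach the two corners where $T_\theta$ meets $\partial\mathbb{S}^2_+$, because $|\nabla\rho|\sim1/\langle x,\vec{e}_3\rangle$ and $|D^2\rho|\sim1/\langle x,\vec{e}_3\rangle^2$ blow up there. The interior Schauder estimates you invoke are only uniform on compact subsets away from $\partial\mathbb{S}^2_+$, so they do not yield the uniform-in-$\epsilon$ tubular neighborhood of positivity around $T_{\theta+\epsilon}$ that your contradiction requires. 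This is precisely the reason the paper abandons pointwise control near the corners and uses instead an energy identity tested against $w_{\theta+\epsilon,\sigma}$: the asymptotics \eqref{asym2} imply that $\frac{\nabla\rho_{\theta+\epsilon}}{\sqrt{1+|\nabla\rho_{\theta+\epsilon}|^2}}-\frac{\nabla\rho}{\sqrt{1+|\nabla\rho|^2}}$ tends to a fixed nonzero vector near $\partial\mathbb{S}^2_+$, yielding a first-order (rather than quadratic) coercivity estimate $\langle\nabla w_{\theta+\epsilon,\sigma},\cdot\rangle\geq\lambda_1|\nabla w_{\theta+\epsilon,\sigma}|$ that survives the degeneracy, and the narrow-domain Poincar\'e inequality then closes the estimate. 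Finally, a minor point: the dichotomy ``either $w_\theta\equiv0$ or $w_\theta>0$'' should be resolved by observing that $w_\theta\equiv0$ is impossible (it contradicts $w_\theta\to+\infty$ at the corners of $\Sigma_\theta$ on $\partial\mathbb{S}^2_+$, as the paper notes), not deferred to a ``separate symmetry argument.''
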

\begin{proof}
    Similarly, we will show $w_{\theta+\epsilon,\sigma}\geq0$ in $\Sigma_{\theta+\epsilon}$ for every $\sigma>0$. We first let $\delta$ be an arbitrary number with $\delta\leq\frac{1}{2}\min\{\theta,\frac{\pi}{2}-\theta\}$ and decompose:
    \begin{equation*}
        \Sigma_{\theta+\epsilon}=\Sigma_{\theta-\delta}\cup\Big(\Sigma_{\theta+\epsilon}\setminus\Sigma_{\theta-\delta}\Big).
    \end{equation*}
    
\textbf{Step 1: Positivity in $\Sigma_{\theta-\delta}$.} 
    
In this step, we show that for every $\delta\leq\frac{1}{2}\min\{\theta,\frac{\pi}{2}-\theta\}$, there exists some $\overline{\epsilon}(\delta)\leq\delta$, such that for every $\epsilon\leq\overline{\epsilon}(\delta)$,
    \begin{equation}\label{eq. limsup step 1}
        w_{\theta+\epsilon}(x)>0,\quad\mbox{for all }x\in\Sigma_{\theta-\delta}.
    \end{equation}
    
For every $x\in\Sigma_{\theta-\delta}$, we can assume $x\in T_{\alpha}$ for some $\alpha\in(0,\theta-\delta)$, then for every $\gamma\in(\theta,\theta+\delta)$, we have
    \begin{equation*}
        \frac{\langle x_{\gamma},\vec{e}_{3}\rangle}{\langle x,\vec{e}_{3}\rangle}=\frac{\sin{(2\gamma-\alpha)}}{\sin{\alpha}}=\frac{\sin{(\min\{2\gamma-\alpha,\pi-2\gamma+\alpha\})}}{\sin{\alpha}}.
    \end{equation*}
Notice that when $\delta\leq\frac{1}{2}\min\{\theta,\frac{\pi}{2}-\theta\}$, $\gamma\in(\theta,\theta+\delta)$, and $\alpha\in(0,\theta-\delta)$, we have
    \begin{equation*}
        \alpha+2\delta\leq\min\{2\gamma-\alpha,\pi-2\gamma+\alpha\}\leq\frac{\pi}{2},
    \end{equation*}
    which means that there exists some $0<c_{1}(\delta)\leq\frac{1}{10}$, such that
    \begin{equation*}
        \frac{\langle x_{\gamma},\vec{e}_{3}\rangle}{\langle x,\vec{e}_{3}\rangle}\geq1+c_{1}(\delta)>1\mbox{ in }\Sigma_{\theta-\delta}.
    \end{equation*}
    For such a constant $c_{1}(\delta)$, we can find a corresponding constant $h_{1}(\delta)$, such that
    \begin{equation*}
        \Big|\frac{e^{\rho(y)}}{2\langle y,\vec{e}_{3}\rangle}-1\Big|\leq\frac{c_{1}(\delta)}{100},\quad\mbox{for all }y\in\mathbb{S}^{2}_{+}\mbox{ with }\langle y,\vec{e}_{3}\rangle\leq h_{1}(\delta).
    \end{equation*}
    Then, we also find a constant $h_{2}(\delta)\leq h_{1}(\delta)$, such that
    \begin{equation*}
        \rho(z)\leq\rho(y),\quad\mbox{for all }y,z\in\mathbb{S}^{2}_{+}\mbox{ with }\langle y,\vec{e}_{3}\rangle\geq h_{1}(\delta)\mbox{ and }\langle z,\vec{e}_{3}\rangle\leq h_{2}(\delta).
    \end{equation*}
    This allows us to prove \eqref{eq. limsup step 1} with the additional assumption that $\langle x,\vec{e}_{3}\rangle\leq h_{2}(\delta)$. In fact, we let $\gamma\in[\theta,\theta+\delta]$. If $\langle x_{\gamma},\vec{e}_{3}\rangle\geq h_{1}(\delta)$, then \eqref{eq. limsup step 1} is obvious. On the other hand, if $\langle x_{\gamma},\vec{e}_{3}\rangle\leq h_{1}(\delta)$, we then have
    \begin{align*}
        e^{\rho(x_{\gamma})}-e^{\rho(x)}\geq&2\langle x_{\gamma},\vec{e}_{3}\rangle\Big(1-\frac{c_{1}(\delta)}{100}\Big)-2\langle x,\vec{e}_{3}\rangle\Big(1+\frac{c_{1}(\delta)}{100}\Big)\\
        \geq&2\langle x,\vec{e}_{3}\rangle\Big\{\Big(1+c_{1}(\delta)\Big)\cdot\Big(1-\frac{c_{1}(\delta)}{100}\Big)-\Big(1+\frac{c_{1}(\delta)}{100}\Big)\Big\}>0,
    \end{align*}
    which implies that \eqref{eq. limsup step 1} holds if we additionally know that $\langle x,\vec{e}_{3}\rangle\leq h_{2}(\delta)$.

    For every $x\in\Sigma_{\theta-\delta}$ with $\langle x,\vec{e}_{3}\rangle\geq h_{2}(\delta)$, we notice that $\langle x_{\gamma},\vec{e}_{3}\rangle\geq h_{3}(\theta,\delta)>0$ for all $\gamma\in[\theta,\theta+\delta]$, indicating that
    \begin{equation}\label{eq. gradient bound in limsup}
        \rho(x),\rho_{\gamma}(x),|\nabla\rho(x)|,|\nabla\rho_{\gamma}(x)|\leq C_{2}(\theta,\delta)
    \end{equation}
    Then, the proof of \eqref{eq. limsup step 1} in this case is reduced to showing the following strict inequality for some $c_{3}(\theta,\delta)>0$:
    \begin{equation}\label{eq. limsup strong maximum principle}
        w_{\theta}(x)\geq c_{3}(\theta,\delta)>0,\quad\mbox{for all }x\in\overline{\Sigma_{\theta-\delta}}\mbox{ with }\langle x,\vec{e}_{3}\rangle\geq h_{2}(\delta).
    \end{equation}
Since $\overline{\Sigma_{\theta-\delta}}\cap\{x:\langle x,\vec{e}_{3}\rangle\geq h_{2}(\delta)\}$ is a compact set, we can show \eqref{eq. limsup strong maximum principle} by contradiction. 

Suppose that \eqref{eq. limsup strong maximum principle} fails, then by $w_{\theta}\geq0$ in $\Sigma_{\theta}$, we can assume that $w_{\theta}(x)=0$ for some $x\in\overline{\Sigma_{\theta-\delta}}\cap\{x:\langle x,\vec{e}_{3}\rangle\geq h_{2}(\delta)\}$. For $y\in\mathbb{S}^{2}_{+}$ sufficiently close to $x$, we have that $\rho$ and $\rho_{\theta}$ both satisfy the equation \eqref{eq: radial}. Therefore,
    \begin{equation}\label{eq. difference equation}
        \mathrm{div}\Big(\frac{\nabla\rho_{\theta}}{\sqrt{1+|\nabla\rho_{\theta}|^2}}-\frac{\nabla\rho}{\sqrt{1+|\nabla\rho|^2}}\Big)=\frac{1}{2}e^{2\rho}\sqrt{1+|\nabla\rho|^2}-\frac{1}{2}e^{2\rho_{\theta}}\sqrt{1+|\nabla\rho_{\theta}|^2}.
    \end{equation}
    For the left-hand side of \eqref{eq. difference equation}, we apply \eqref{eq. gradient bound in limsup}, Lemma~\ref{lem. uniform elliptic in energy sense} and Lemma~\ref{lem. linear algebra}, we see
    \begin{equation*}
        \frac{\nabla\rho_{\theta}}{\sqrt{1+|\nabla\rho_{\theta}|^2}}-\frac{\nabla\rho}{\sqrt{1+|\nabla\rho|^2}}=A\cdot(\nabla\rho_{\theta}-\nabla\rho)=A\cdot\nabla w_{\theta},
    \end{equation*}
    where $A$ is a self-adjoint uniformly elliptic operator near $x$. For the right-hand side of \eqref{eq. difference equation}, we use the boundedness estimate \eqref{eq. gradient bound in limsup} near $x$ to conclude that
    \begin{align*}
        \frac{1}{2}e^{2\rho}\sqrt{1+|\nabla\rho|^2}-\frac{1}{2}e^{2\rho_{\theta}}\sqrt{1+|\nabla\rho_{\theta}|^2}=\vec{b}\cdot\nabla w_{\theta}+c\cdot w_{\theta},
    \end{align*}
    where the coefficients $\vec{b}$ and $c$ are uniformly bounded near $x$. Thus \eqref{eq. difference equation} indicates that
    \begin{equation*}
        \mathrm{div}(A\cdot\nabla w_{\theta})=\vec{b}\cdot\nabla w_{\theta}+c\cdot w_{\theta}.
    \end{equation*}
    By the strong maximum principle, we see $w_{\theta}\equiv0$ near $x$. By the open-and-closed argument, we have $w_{\theta}\equiv0$ everywhere in $\Sigma_{\theta}$. This contradicts the asymptotic behavior that $w_{\theta}(x)=+\infty$ as $x\to x^{*}\in\Sigma_{\theta}\cap\{\langle x,\vec{e}_{3}\rangle=0\}$. Then, we have shown \eqref{eq. limsup strong maximum principle} and hence \eqref{eq. limsup step 1}.

\textbf{Step 2: Positivity in $\Sigma_{\theta+\epsilon}\setminus\Sigma_{\theta-\delta}$.} 
    
In this step, we show that there exists some $\delta\leq\frac{1}{2}\min\{\theta,\frac{\pi}{2}-\theta\}$, such that for every $\epsilon\leq\overline{\epsilon}(\delta)$, $w_{\theta+\epsilon}(x)\geq0$ in $\Sigma_{\theta+\epsilon}\setminus\Sigma_{\theta-\delta}$. In fact, it suffices to show the following inequality for $\epsilon\leq\overline{\epsilon}(\delta)$:
    \begin{equation}
        w_{\theta+\epsilon,\sigma}(x)\geq0,\quad\mbox{for all }x\in\Sigma_{\theta+\epsilon}\setminus\Sigma_{\theta-\delta}\mbox{ and all }\sigma>0.
    \end{equation}
    We multiply by $w_{\theta+\epsilon,\sigma}$ on both sides of the following equation
    \begin{equation*}
        \mathrm{div}(\frac{\nabla\rho}{\sqrt{1+|\nabla\rho|^{2}}})-\mathrm{div}(\frac{\nabla\rho_{\theta+\epsilon}}{\sqrt{1+|\nabla\rho_{\theta+\epsilon}|^{2}}})=\frac{e^{2\rho_{\theta+\epsilon}}}{2}\sqrt{1+|\nabla\rho_{\theta}|^{2}}-\frac{e^{2\rho}}{2}\sqrt{1+|\nabla\rho|^{2}},
    \end{equation*}
    and integrate in $\Omega_{\theta+\epsilon,\sigma}$. By Lemma~\ref{lem. strong inclusion} and Step 1, we know $\Omega_{\theta+\epsilon,\sigma}\subseteq\Big(\Sigma_{\theta+\epsilon}\setminus\Sigma_{\theta-\delta}\Big)$, and no boundary term is produced, so we have
    \begin{align}\label{eq. energy identity in step 2}
        &\int_{\Omega_{\theta+\epsilon,\sigma}}\langle\nabla w_{\theta+\epsilon,\sigma},\frac{\nabla\rho_{\theta+\epsilon}}{\sqrt{1+|\nabla\rho_{\theta+\epsilon}|^{2}}}-\frac{\nabla\rho}{\sqrt{1+|\nabla\rho|^{2}}}\rangle dx\\
        =&\int_{\Omega_{\theta+\epsilon,\sigma}}w_{\theta+\epsilon,\sigma}\cdot\Big(\frac{e^{2\rho_{\theta+\epsilon}}}{2}\sqrt{1+|\nabla\rho_{\theta+\epsilon}|^{2}}-\frac{e^{2\rho}}{2}\sqrt{1+|\nabla\rho|^{2}}\Big)dx.
    \end{align}

To analyze this equality, we decompose $\Omega_{\theta+\epsilon,\sigma}$ into two parts.
    
First, we let $h_{4}(\theta,\delta)$ be sufficiently small, such that for every $\epsilon\leq\delta$, and for every $x\in\Omega_{\theta+\epsilon,\sigma}\subseteq\Sigma_{\theta+\epsilon}\setminus\Sigma_{\theta-\delta}$ satisfying $\langle x,\vec{e}_{3}\rangle\leq h_{4}(\theta,\delta)$, $\langle x_{\theta+\epsilon},\vec{e}_{3}\rangle$ is also small. Based on \eqref{asym1} and \eqref{asym2}, we have
    \begin{equation*}
        \nabla\rho(x)=\frac{1}{\langle x,\vec{e}_{3}\rangle}\Big(\vec{e}_{3}+o(1)\Big),\quad\nabla\rho(x_{\theta+\epsilon})=\frac{1}{\langle x_{\theta+\epsilon},\vec{e}_{3}\rangle}\Big(\vec{e}_{3}+o(1)\Big).
    \end{equation*}
    Here, $o(1)$ is sufficiently small when $\delta$ is sufficiently small and $\epsilon\leq\delta$. Notice that for sufficiently small $\delta$ (and $\epsilon\leq\delta$), $\langle x_{\theta+\epsilon},\vec{e}_{3}\rangle=\langle x,\vec{e}_{3}\rangle\cdot\Big(1+o(1)\Big)$ in $\Sigma_{\theta+\epsilon}\setminus\Sigma_{\theta-\delta}$, and $\vec{v}_{\theta+\epsilon}=\vec{v}_{\theta}+o(1)$ (see the notation of $\vec{v}_{\theta}$ in Definition~\ref{def. moving plane setting}), so we have
    \begin{equation*}
        \nabla\rho_{\theta+\epsilon}(x)=\nabla\rho(x_{\theta+\epsilon})-2\langle\nabla\rho(x_{\theta+\epsilon}),\vec{v}_{\theta+\epsilon}\rangle\vec{v}_{\theta+\epsilon}=\frac{1}{\langle x,\vec{e}_{3}\rangle}\Big(\sin{(2\theta)}\vec{e}_{1}-\cos{(2\theta)}\vec{e}_{3}+o(1)\Big).
    \end{equation*}
    Therefore, we have that
    \begin{equation*}
        \frac{\nabla\rho_{\theta+\epsilon}}{\sqrt{1+|\nabla\rho_{\theta+\epsilon}|^{2}}}-\frac{\nabla\rho}{\sqrt{1+|\nabla\rho|^{2}}}=\sin{(2\theta)}\vec{e}_{1}-2\cos^{2}{(\theta)}\vec{e}_{3}+o(1).
    \end{equation*}
    For a given $\theta$, we then see that there exists a positive constant $\lambda_{1}=\lambda_{1}(\theta)$, such that for all $x\in\Omega_{\theta+\epsilon,\sigma}$ with $\langle x,\vec{e}_{3}\rangle\leq h_{4}(\theta,\delta)$, it holds
    \begin{equation*}
        \langle\nabla w_{\theta+\epsilon,\sigma},\frac{\nabla\rho_{\theta+\epsilon}}{\sqrt{1+|\nabla\rho_{\theta}|^{2}}}-\frac{\nabla\rho}{\sqrt{1+|\nabla\rho|^{2}}}\rangle\geq\lambda_{1}|\nabla w_{\theta+\epsilon,\sigma}|.
    \end{equation*}
    Moreover, in this region, we have
    \begin{equation*}
        w_{\theta+\epsilon,\sigma}\cdot\Big(\frac{e^{2\rho_{\theta+\epsilon}}}{2}\sqrt{1+|\nabla\rho_{\theta+\epsilon}|^{2}}-\frac{e^{2\rho}}{2}\sqrt{1+|\nabla\rho|^{2}}\Big)\leq C|w_{\theta+\epsilon,\sigma}|.
    \end{equation*}

    Second, we consider $x\in\Omega_{\theta+\epsilon,\sigma}$ with $\langle x,\vec{e}_{3}\rangle\geq h_{4}(\theta,\delta)$. In this region, both $\langle x,\vec{e}_{3}\rangle$ and $\langle x_{\theta+\epsilon},\vec{e}_{3}\rangle$ are bounded from below, implying the boundedness of $|\nabla\rho(x)|$ and $|\nabla\rho_{\theta+\epsilon}(x)|$. Moreover, by choosing $\sigma$ small, depending on $h_{4}(\theta,\delta)$, we can even ensure that
    \begin{equation*}
        w_{\theta+\epsilon,\sigma}\equiv w_{\theta+\epsilon}\mbox{ when }\langle x,\vec{e}_{3}\rangle\geq h_{4}(\theta,\delta).
    \end{equation*}
    By Lemma~\ref{lem. uniform elliptic in energy sense}, we have
    \begin{align*}
        \langle\nabla w_{\theta+\epsilon,\sigma},\frac{\nabla\rho_{\theta}}{\sqrt{1+|\nabla\rho_{\theta}|^{2}}}-\frac{\nabla\rho}{\sqrt{1+|\nabla\rho|^{2}}}\rangle=&\langle\nabla\rho_{\theta+\epsilon}-\nabla\rho,\frac{\nabla\rho_{\theta}}{\sqrt{1+|\nabla\rho_{\theta}|^{2}}}-\frac{\nabla\rho}{\sqrt{1+|\nabla\rho|^{2}}}\rangle\\
        \geq&\lambda_{2}|\nabla\rho_{\theta+\epsilon}-\nabla\rho|^{2}=\lambda_{2}|\nabla w_{\theta+\epsilon,\sigma}|^{2}.
    \end{align*}
    Besides, for $x\in\Omega_{\theta+\epsilon,\sigma}$ with $\langle x,\vec{e}_{3}\rangle\geq h_{4}(\theta,\delta)$, we have
    \begin{align*}
        \frac{e^{2\rho_{\theta+\epsilon}}}{2}\sqrt{1+|\nabla\rho_{\theta+\epsilon}|^{2}}-\frac{e^{2\rho}}{2}\sqrt{1+|\nabla\rho|^{2}}=&\vec{b}\cdot(\nabla\rho_{\theta+\epsilon}-\nabla\rho)+c\cdot(\rho_{\theta+\epsilon}-\rho)\\
        =&\vec{b}\cdot\nabla w_{\theta+\epsilon,\sigma}+c\cdot w_{\theta+\epsilon,\sigma}.
    \end{align*}
    Then, we have:
    \begin{equation*}
        w_{\theta+\epsilon,\sigma}\cdot\Big(\frac{e^{2\rho_{\theta+\epsilon}}}{2}\sqrt{1+|\nabla\rho_{\theta+\epsilon}|^{2}}-\frac{e^{2\rho}}{2}\sqrt{1+|\nabla\rho|^{2}}\Big)\leq\frac{\lambda_{2}}{2}|\nabla w_{\theta+\epsilon,\sigma}|^{2}+C|w_{\theta+\epsilon,\sigma}|^{2}.
    \end{equation*}
    We then obtain from the identity \eqref{eq. energy identity in step 2} the following estimate:
    \begin{align*}
        &\int_{\Omega_{\theta+\epsilon,\sigma}\cap\{\langle x,\vec{e}_{3}\rangle\leq h_{4}\}}|\nabla w_{\theta+\epsilon,\sigma}|dx+\int_{\Omega_{\theta+\epsilon,\sigma}\cap\{\langle x,\vec{e}_{3}\rangle\geq h_{4}\}}|\nabla w_{\theta+\epsilon,\sigma}|^{2}dx\\
        \leq&C\int_{\Omega_{\theta+\epsilon,\sigma}\cap\{\langle x,\vec{e}_{3}\rangle\leq h_{4}\}}|w_{\theta+\epsilon,\sigma}|dx+C\int_{\Omega_{\theta+\epsilon,\sigma}\cap\{\langle x,\vec{e}_{3}\rangle\geq h_{4}\}}|w_{\theta+\epsilon,\sigma}|^{2}dx.
    \end{align*}
    However, recall that $w_{\theta+\epsilon,\sigma}=0$ on $\partial\Omega_{\theta+\epsilon,\sigma}$ and that
    \begin{equation*}
        \Omega_{\theta+\epsilon,\sigma}\subseteq\Sigma_{\theta+\epsilon}\setminus\Sigma_{\theta-\delta}\subseteq\Sigma_{\theta+\delta}\setminus\Sigma_{\theta-\delta},
    \end{equation*}
    we then infer from the Poincar\'e inequality that
    \begin{align*}
        \int_{\Omega_{\theta+\epsilon,\sigma}\cap\{\langle x,\vec{e}_{3}\rangle\leq h_{4}\}}|\nabla w_{\theta+\epsilon,\sigma}|dx\geq&\frac{C^{-1}}{\delta}\int_{\Omega_{\theta+\epsilon,\sigma}\cap\{\langle x,\vec{e}_{3}\rangle\leq h_{4}\}}|w_{\theta+\epsilon,\sigma}|dx,\\
        \int_{\Omega_{\theta+\epsilon,\sigma}\cap\{\langle x,\vec{e}_{3}\rangle\geq h_{4}\}}|\nabla w_{\theta+\epsilon,\sigma}|^{2}dx\geq&\frac{C^{-1}}{\delta^{2}}\int_{\Omega_{\theta+\epsilon,\sigma}\cap\{\langle x,\vec{e}_{3}\rangle\geq h_{4}\}}|w_{\theta+\epsilon,\sigma}|^{2}dx.
    \end{align*}
    Therefore, for sufficiently small $\delta$, we must have that $w_{\theta+\epsilon,\sigma}\equiv0$ in $\Omega_{\theta+\epsilon,\sigma}$, and thus $\Omega_{\theta+\epsilon,\sigma}=\emptyset$. Then we have shown the positivity of $w_{\theta+\epsilon}$ in the region $\Sigma_{\theta+\epsilon}$.
\end{proof}
So far we have shown that $\rho:\mathbb{S}^{2}_{+}\to\mathbb{R}$ is symmetric about the plane $\{x_{1}=0\}$. The same argument above is valid for any other directions in $span(\vec{e}_{1},\vec{e_{2}})$, and hence $\rho(x)$ is rotational symmetric about the $\vec{e}_{3}$ axis.

{\bf 4. Uniqueness under rotational symmetry}

We have already shown that $\rho:\mathbb{S}^{2}_{+}\to\mathbb{R}$ is a function depending only on $\theta=\arcsin{\langle x,\vec{e}_{3}\rangle}$. Set 
\begin{equation*}
    u(\theta)=e^{\rho(\theta)},\quad\theta\in[0,\frac{\pi}{2}],
\end{equation*}
it follows from \eqref{eq: radial} that $u:[0,\frac{\pi}{2}]\to\mathbb{R}_{+}$ satisfies the following ODE: 
\begin{equation}\label{eq. ODE}
    u^{2}\cdot u_{\theta\theta}-u\cdot u_{\theta}^{2}-\tan{\theta}\cdot u_{\theta}(u^{2}+u_{\theta}^{2})+\frac{u}{2}(u^{2}+u_{\theta}^{2})^{2}=0.
\end{equation}

We shall show that $v(\theta)=2\sin(\theta)$ is the only solution to \eqref{eq. ODE}. First note that if $M$ is a radial graph over $\mathbb{S}^2_{+}$ with radial function $\rho(\theta)=\ln(2 \sin(\theta))$, then $M$ is just a sphere of radius $1$ centered at $\vec{e}_{3}$.

Suppose there admits {\bf another} radial solution $u(\theta)$ of \eqref{eq: radial}, denote by $N$ its corresponding surface. 

We {\bf Claim} that $N$ stays above $M$ near the origin. We call the plane spanned by $\vec{e}_1$ and $\vec{e}_2$ the $y_{1}-y_{2}$ plane. We can view both $N$ and $M$ as graphs of two radial symmetric functions defined on a neighborhood of the origin in the $y_{1}-y_{2}$ plane. Denote these functions by $\varphi_{N}$ and $\varphi_{M}$ respectively. 
We have 
\[
H_N=\operatorname{div_{y_{1},y_{2}}}(\frac{\nabla \varphi_N}{\sqrt{1+|\nabla \varphi_N|^2}})\geq 2=H_M=\operatorname{div_{y_{1},y_{2}}}(\frac{\nabla \varphi_M}{\sqrt{1+|\nabla \varphi_M|^2}}).
\]
The second inequality follows from \eqref{eq:surface2}.
If there exists $r_0>0$ such that $\varphi_{N}(r_0)<\varphi_M(r_0)$, then by the maximum principle for the usual minimal surface equation, we derive that $\varphi_N(0)<\varphi_M(0)=0$, a contradiction. 

Since $N$ stays above $M$ near the origin, there exists $\theta_0>0$ such that  $u(\theta)\leq v(\theta), \forall \theta\in[0, \theta_0)$ and $u(\theta_0)<v(\theta_0)$.

Let
\begin{equation}\label{eq. fully nonlinear operator F()}
    F(M, P, Z, \theta)=M-\frac{P^2}{Z}-\tan(\theta)\frac{P(Z^2+P^2)}{Z^2}+\frac{(Z^2+P^2)^2}{2Z}.
\end{equation}
In view of \eqref{eq. ODE}, $F(u'', u', u, \theta)=F(v'', v', v, \theta)$ for all $\theta \in (0, \theta_0)$. Setting $w=u-v$, then by the Lagrange mean value theorem, we have 
\begin{equation}\label{eq. Lagrange MVT}
F_{M}(\xi_{\theta}) w''(\theta)+ F_{P}(\xi_{\theta}) w'(\theta)+ F_Z(\xi_{\theta}) w(\theta)=0,
\end{equation}
where $\xi_{\theta}$ is a point on the line segment from $(u''(\theta), u'(\theta), u(\theta), \theta)$ to $(v''(\theta), v'(\theta), v(\theta), \theta)$.

In view of the asymptotic behavior \eqref{asym1} and \eqref{asym2}, we have 
\[
(\xi_{\theta})=(\star, 2+o(1), 2\theta+o(\theta), \theta).
\]
Although the value of $\star$ is unclear, we still get from \eqref{eq. fully nonlinear operator F()} and \eqref{eq. Lagrange MVT} that $w(\theta)$ satisfies
\[
w''(\theta)+  (\frac{3+o(1)}{\theta})w'(\theta)+ (\frac{1+o(1)}{\theta^2})w(\theta)=0.
\]
Since $w(\theta_0)<0$, there exists $\kappa>0$ such that 
$w(\theta_0)+\kappa \theta_0\leq 0.$
The function $\psi(\theta):=w(\theta)+\kappa \theta$ satisfies 
\[
\psi''(\theta)+(\frac{3+o(1)}{\theta})(\psi'(\theta)-\kappa)+(\frac{1+o(1)}{\theta^2})(\psi-\kappa \theta)=0.
\]
A simple maximum argument shows that $\psi$ cannot have any positive maximum in the interior of $(0, \theta_0)$. Thus $\psi(\theta)\leq 0$, which implies that $w'(0)\leq -\kappa<0$. This contradicts to $w'(0)=0$.

Above argument indicates that $N$ coincides with $M$ near the origin, but once we are away from the origin, the standard Picard's theorem for the uniqueness of \eqref{eq. ODE} applies and the proof thus is finished. 
\end{proof}
\section{Further discussion}
We conclude this paper with two remarks. As indicated in the introduction, the strong Green function rigidity for the conformal Laplace is proved via the connection with positive mass theorem. An exploration along this direction together with our main theorem indicates the following theorem. 

\begin{theorem}
    Let $M\subset \mathbb{R}^3$ be a closed embedded surface with $0\in M$ and let $K(y)$ be its Gaussian curvature. Let $\tilde{M}$ denote the image of $M$ under the Kelvin transform $\Phi(y)=\frac{y}{|y|^2}$. Suppose that the Gaussian curvature $\widetilde{K}(y)$ of $\widetilde{M}$ has the following correspondence with $K(y)$:
    \[
    \widetilde{K}(y)=\frac{1}{|y|^4}\left(K(\frac{y}{|y|^2})-1\right), 
    \]
    then $M$ must be a round sphere. 
\end{theorem}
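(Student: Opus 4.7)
The plan is to show that the stated Gauss curvature correspondence is equivalent to the Green function hypothesis of Theorem~\ref{Tmain}, from which the conclusion is immediate. The essential tool is the conformal invariance of Gauss curvature in two dimensions, combined with the fact that the Kelvin inversion $\Phi(y)=y/|y|^{2}$ is a conformal diffeomorphism of $\mathbb{R}^{3}\setminus\{0\}$ satisfying $|d\Phi|^{2}=|y|^{-4}$.

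First I would restrict $\Phi$ to the embedded surface and note that the induced metrics $g$ on $M$ and $\widetilde{g}$ on $\widetilde{M}$ satisfy $\Phi^{*}\widetilde{g}=|y|^{-4}\,g$ on $M\setminus\{0\}$. Writing this conformal change as $\Phi^{*}\widetilde{g}=e^{2\varphi}g$ with $\varphi=-2\log|y|$, the two-dimensional Gauss curvature transformation law yields
\[
\widetilde{K}(\Phi(y))=|y|^{4}\bigl(K(y)+2\Delta_{g}\log|y|\bigr).
\]
On the other hand, substituting $q=\Phi(y)$ into the hypothesized identity gives $\widetilde{K}(\Phi(y))=|y|^{4}(K(y)-1)$. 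Equating the two expressions produces the pointwise relation
\[
\Delta_{g}\log|y|=-\tfrac{1}{2}\quad\text{on } M\setminus\{0\}.
\]

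Next I would promote this to a distributional identity on all of $M$. Since $0\in M$ and $M$ is $C^{2}$, writing $M$ near the origin as a graph over its tangent plane $T_{0}M$ shows that $|y|$ agrees with the planar distance $|x'|$ up to terms of order $|x'|^{3}$, and that $g$ agrees with the flat tangent-plane metric to order $|x'|^{2}$. Combined with the classical planar identity $\Delta\log|x'|=2\pi\delta_{0}$, this yields $\Delta_{g}\log|y|=-\tfrac{1}{2}+2\pi\delta_{0}$ distributionally on $M$; integrating both sides forces $\text{Area}(M)=4\pi$. Setting $G(y)=-\frac{1}{2\pi}\log|y|+c$ with $c$ chosen so that $\int_{M}G=0$, we then obtain $\Delta_{g}G=\frac{1}{4\pi}-\delta_{0}=\frac{1}{\text{Area}(M)}-\delta_{0}$, so $G$ is exactly the Green function of $(M,g)$ with pole at $0$, of the form required by Theorem~\ref{Tmain}. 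Applying that theorem concludes that $M$ is a round sphere.

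The main obstacle I expect is the local analysis at the singular point $0\in M$: verifying that the Laplacian of $\log|y|$ with respect to the induced metric produces precisely $+2\pi\delta_{0}$, with no additional distributional contribution from the second fundamental form of $M$ at $0$. This requires combining a careful Taylor expansion of $M$ near $0$ with a standard cut-off-and-pass-to-the-limit argument on punctured neighborhoods, much like the logarithmic potential calculation in the plane. Once this is in place, the remaining steps are purely a translation between the curvature identity and the Green function form already extracted from \eqref{eq:surface2} in Section~2.
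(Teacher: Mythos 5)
Your argument is correct and is essentially identical to the paper's: both pull back the Euclidean metric on $\widetilde{M}$ via the Kelvin inversion to realize $\widetilde{g}$ as $|y|^{-4}g$ on $M\setminus\{0\}$, apply the two-dimensional conformal Gauss curvature formula, and deduce $\Delta_g\log|y|=-\tfrac12$ so that $G(y)=-\tfrac1{2\pi}\log|y|+c$ is the Green function, then invoke Theorem~\ref{Tmain}. The only difference is that you spell out the distributional verification $\Delta_g\log|y|=-\tfrac12+2\pi\delta_0$ and the consequent normalization $\mathrm{Area}(M)=4\pi$, which the paper leaves implicit.
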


\begin{proof}
Let $\widetilde{g}$ be the restriction of Euclidean metric on $\widetilde{M}$.
    A direct computation shows that 
    $\widetilde{g}(\Phi(y))=\frac{1}{|y|^4} g(y)$, i.e,. $\widetilde{g}$ can be viewed as a metric conformal to $g$ on $M\setminus\{0\}$. The equation of the Gauss curvature under conformal change implies that
    \[
    \Delta_g( \ln(\frac{1}{|y|^2})) + \widetilde{K}(\Phi(y)) \frac{1}{|y|^4}= K,
    \]
    from which we find the Green function of $M$ at the origin has the form $G(y):=-\ln|y|$. Our main theorem yields the desired conclusion.    
\end{proof}

We hope our method can also be used to deal with higher dimensional cases. A derivation similar to \eqref{eq:laplaceonM},\eqref{eq:lapalce},\eqref{eq:surface1}
shows the following
\begin{proposition}
    Let $M^n \subset\mathbb{R}^{n+1}, (n\geq 3)$ be a closed embedded hypersurface with $0\in M$.
    Then the Green function for its conformal Laplace $(-\Delta_g+\frac{(n-2)}{4(n-1)}R_g)$ has the form $G(y,0)=c_n |y|^{2-n}$ if and only if the mean curvature  $H$ (the averaged version $H=\frac{\kappa_1+\cdots +\kappa_n}{n}$) satisfies
    \begin{align} \label{eq: conformal}
    H=\frac{\langle y, \nu \rangle}{|y|^2}+\frac{R}{4n(n-1)} \frac{|y|^2}{\langle y, \nu\rangle}, \quad y\in M\setminus\{0\}
    \end{align}
  \end{proposition}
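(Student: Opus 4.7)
The plan is to replicate, step by step, the derivation of \eqref{eq:surface2} in the two-dimensional setting, with the ordinary Laplacian replaced by the conformal Laplacian $L_g=-\Delta_g+\tfrac{n-2}{4(n-1)}R_g$ and the logarithmic ansatz replaced by the power ansatz $G(y)=c_n|y|^{2-n}$. The distinction to keep in mind is that, unlike $\Delta_g$, the operator $L_g$ on a closed manifold of positive Yamabe type has no constant in its kernel, so the Green function directly satisfies
\[
-\Delta_g G+\tfrac{n-2}{4(n-1)}R_g\,G=\delta_0
\]
in the sense of distributions on $M$, with no averaging term of the sort appearing in \eqref{eq:laplaceonM}. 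Consequently, for $y\in M\setminus\{0\}$ one has $\Delta_g G=\tfrac{n-2}{4(n-1)} R_g\,G$.

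First I would compute the three ambient quantities attached to $G=c_n|y|^{2-n}$. Direct differentiation in $\mathbb{R}^{n+1}$, using $\Delta_{\mathbb{R}^{n+1}}|y|^{\alpha}=\alpha(\alpha+n-1)|y|^{\alpha-2}$, gives
\[
\Delta_{\mathbb{R}^{n+1}}G=c_n(2-n)|y|^{-n},\qquad \partial_\nu G=c_n(2-n)|y|^{-n}\langle y,\nu\rangle,
\]
\[
\nabla^2G(\nu,\nu)=c_n(2-n)|y|^{-n}\Bigl(1-n\,\frac{\langle y,\nu\rangle^{2}}{|y|^{2}}\Bigr).
\]

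Next I would insert these into the surface decomposition identity (the higher-dimensional analog of \eqref{eq:surface1})
\[
\Delta_{\mathbb{R}^{n+1}}G=\Delta_M G+\bar H\,\partial_\nu G+\nabla^2G(\nu,\nu),
\]
where $\bar H=\kappa_1+\cdots+\kappa_n=nH$ is the trace mean curvature, and substitute $\Delta_M G=\tfrac{n-2}{4(n-1)}R_g\,G$ from the Green function equation. After dividing out the common factor $c_n(2-n)|y|^{-n}$, the identity forces $\langle y,\nu\rangle\ne 0$ on $M\setminus\{0\}$ (so $M$ is again star-shaped), and solving for $H$ yields precisely
\[
H=\frac{\langle y,\nu\rangle}{|y|^{2}}+\frac{R}{4n(n-1)}\frac{|y|^{2}}{\langle y,\nu\rangle}.
\]
The reverse implication is the same chain of identities read backwards: the curvature equation makes $c_n|y|^{2-n}$ satisfy $L_g u=0$ on $M\setminus\{0\}$; one then identifies it with the Green function by matching its singular behavior at $0$.

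The algebraic manipulations in Steps 1 and 2 are routine. The one place that merits care is the backward direction, where one must check that the Euclidean profile $c_n|y|^{2-n}$ has the correct \emph{intrinsic} singular behavior at $0\in M$ to produce a unit $\delta_0$ under $L_g$. Since $0$ is a smooth point of $M$ with Euclidean tangent space, the intrinsic and extrinsic distances agree to first order there, and the curvature correction $\tfrac{n-2}{4(n-1)}R_g$ is of lower order than the $|y|^{-n}$ singularity; hence with the standard normalization $c_n=[(n-2)\omega_{n-1}]^{-1}$ the profile matches the fundamental solution of $L_g$ at $0$, and uniqueness of the Green function closes the argument. This local asymptotic verification is the only ingredient beyond the identity-level computation carried out above.
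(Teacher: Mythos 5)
Your derivation is the one the paper indicates (it says the proposition follows from a computation similar to (2.2)--(2.4)), and your algebra checks out: the three ambient quantities for $G=c_n|y|^{2-n}$ are correct, the surface decomposition $\Delta_{\mathbb{R}^{n+1}}G=\Delta_M G+\bar H\,\partial_\nu G+\nabla^2 G(\nu,\nu)$ is the right analog of (2.4), and after substituting $\Delta_M G=\tfrac{n-2}{4(n-1)}R_g G$ and dividing by $c_n(2-n)|y|^{-n}$ one obtains exactly $nH\langle y,\nu\rangle = n\langle y,\nu\rangle^2/|y|^2+\tfrac{R_g|y|^2}{4(n-1)}$, which is (3.2) after dividing by $n\langle y,\nu\rangle$. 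The reverse direction by matching the $d_g^{2-n}$ singularity and using injectivity of $L_g$ on the removable-singularity difference is also the right argument, though you state it briefly.

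One small correction: your parenthetical that the identity "forces $\langle y,\nu\rangle\neq 0$ on $M\setminus\{0\}$, so $M$ is again star-shaped" does not follow the same way it does in dimension two. In (2.5), clearing denominators leaves the constant $\tfrac{1}{2}$, so $\langle y,\nu\rangle=0$ is an outright contradiction. Here the cleared identity is $nH\langle y,\nu\rangle - n\langle y,\nu\rangle^2/|y|^2 = R_g|y|^2/(4(n-1))$, and $\langle y,\nu\rangle=0$ at some point merely forces $R_g=0$ there, which is not a priori impossible. This does not harm the equivalence the proposition asserts (the displayed form of (3.2) already presupposes $\langle y,\nu\rangle\neq 0$, and the cleared version is what the computation actually produces), but star-shapedness should not be claimed as an automatic consequence in $n\ge 3$ without a further positivity input on $R_g$ or on the Green function.
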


Note here $R$ denotes the scalar curvature of $M$, which can be also expressed as $2\sigma_2(II)$. Therefore \eqref{eq: conformal} seems to be more complicated to handle within our approach.

\end{document}